\title{Spectrally degenerate graphs: Hereditary case}
\author{Zden\v{e}k Dvo\v{r}\'ak\thanks{Supported in part
   by the grant GA201/09/0197 of Czech Science Foundation.}\\
  Institute for Theoretical Computer Science (ITI)\thanks{Institute for Theoretical Computer Science is supported as
              project 1M0545 by the Ministry of Education of the Czech Republic.}\\
  Charles University\\
  Prague, Czech Republic\\
  email: {\tt rakdver@kam.mff.cuni.cz}
\and
  Bojan Mohar\thanks{Supported in part by the
  Research Grant P1--0297 of ARRS (Slovenia), by an NSERC Discovery Grant (Canada)
  and by the Canada Research Chair program.}~\thanks{On leave from:
  IMFM \& FMF, Department of Mathematics, University of Ljubljana, Ljubljana,
  Slovenia.}\\
  {Department of Mathematics}\\
  {Simon Fraser University}\\
  {Burnaby, B.C. V5A 1S6} \\
  email: {\tt mohar@sfu.ca}
}
\newtheorem{theorem}{Theorem}[section]
\newtheorem{lemma}[theorem]{Lemma}
\newcommand{\DEF}[1]{{\em #1\/}}
\newcommand{\RR}{\ensuremath{\mathbb R}}
\newcommand{\D}{\ensuremath{\Delta}}
\newcommand{\DG}{\ensuremath{\Delta(G)}}
\begin{document}

\maketitle

%%%%%%%%%%%%%%%%%%%%%%%%%%%%%%%%%%%%%%%%%%
\begin{abstract}
It is well known that the spectral radius of a tree whose maximum degree
is $\D$ cannot exceed $2\sqrt{\D-1}$. A similar upper bound holds for arbitrary
planar graphs, whose spectral radius cannot exceed $\sqrt{8\D}+10$, and
more generally, for all $d$-degenerate graphs, where the corresponding 
upper bound is $\sqrt{4d\D}$.
Following this, we say that a graph $G$ is {\em spectrally $d$-degenerate}
if every subgraph $H$ of $G$ has spectral radius at most $\sqrt{d\D(H)}$.
In this paper we derive a rough converse of the above-mentioned results
by proving that each spectrally $d$-degenerate graph $G$
contains a vertex whose degree is at most $4d\log_2(\DG/d)$ (if $\DG\ge 2d$).
It is shown that the dependence on $\Delta$ in this upper bound cannot be eliminated,
as long as the dependence on $d$ is subexponential.
It is also proved that the problem of deciding if a graph is spectrally
$d$-degenerate is co-NP-complete.
\end{abstract}
%%%%%%%%%%%%%%%%%%%%%%%%%%%%%%%%%%%%%%%%%%

\section{Introduction}

All graphs in this paper are finite and \DEF{simple}, i.e.\ no loops or multiple
edges are allowed. We use standard terminology and notation.
We denote by $\Delta(G)$ and $\delta(G)$ the maximum and the minimum degree of $G$,
respectively. If $H$ is a subgraph of $G$, we write $H\subseteq G$.
For a graph $G$, let $\rho(G)$ denote its \DEF{spectral radius}, the largest eigenvalue of
the adjacency matrix of $G$. More generally, if $M$ is a square matrix,
the spectral radius of $M$, denoted by $\rho(M)$, is the maximum modulus
$|\lambda|$ taken over all eigenvalues $\lambda$ of $M$.

If $T$ is a tree, then it is a subgraph of the infinite $\D(T)$-regular tree.
This observation implies that the spectral radius of $T$
is at most $2\sqrt{\D(T)-1}$. Similar bounds have been obtained for arbitrary
planar graphs and for graphs of bounded genus \cite{DM}. In particular,
the following result holds.

\begin{theorem}[Dvo\v r\'ak and Mohar \cite{DM}]
\label{thm:planar}
If\/ $G$ is a planar graph, then $$\rho(G)\le \sqrt{8\DG} + 10.$$
\end{theorem}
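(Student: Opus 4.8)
The plan is to bound $\rho(G)$ through the weighted maximum row-sum (Collatz--Wielandt) inequality: for every positive vector $y$,
\[
\rho(G)=\rho\bigl(\mathrm{diag}(y)^{-1}A(G)\,\mathrm{diag}(y)\bigr)\le\max_{v}\frac{1}{y_v}\sum_{u\sim v}y_u ,
\]
so that the whole problem reduces to producing a single $y$ for which every weighted row-sum is at most $\sqrt{8\DG}+10$. The freedom in choosing $y$ is the entire game: the trivial choice $y\equiv 1$ gives only $\rho(G)\le\DG$, whereas the truth is of order $\sqrt{\DG}$, so the weights must damp the high-degree vertices relative to the low-degree ones by just the right amount. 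The correct scale is dictated by the extremal example $K_{2,n}$, which is planar and has $\rho=\sqrt{2n}=\sqrt{2\DG}$; there the optimal weights are $y_v\propto\sqrt{d(v)}$, and this already pins the separating scale between ``heavy'' and ``light'' vertices near $\sqrt{\DG}$.

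Guided by this, I would fix a degree threshold $\tau$ of order $\sqrt{\DG}$, call a vertex \emph{heavy} if $d(v)\ge\tau$ and \emph{light} otherwise, and try a weighting in which light vertices carry a small weight and heavy vertices a larger one. For a light vertex the row-sum is controlled by its small degree; for a heavy vertex the light neighbours contribute at most $\DG/s$ once the heavy/light weight ratio $s$ is fixed, and balancing these against each other reproduces $\rho=\sqrt{2\DG}$ for $K_{2,n}$ exactly. The sanity check on $K_{2,n}$ is easy; the real content is that a single rigid threshold is too crude for general planar graphs, because vertices of moderate degree and light vertices having many heavy neighbours both spoil the clean balance. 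I therefore expect to need a \emph{graded} weighting, with $y_v$ varying smoothly (or in several levels) with $d(v)$, calibrated so that every row-sum stays near $\sqrt{8\DG}$.

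The essential planar input enters through the heavy--heavy edges, i.e.\ the term $\sum_{u\sim v,\ u\ \mathrm{heavy}}y_u/y_v$ in the row-sum of a heavy vertex. Here I would use planarity rather than mere degeneracy: the subgraph induced by the heavy vertices is planar, hence $5$-degenerate and of average degree less than $6$, and in particular admits an ordering in which every vertex has at most five earlier neighbours. Refining the heavy weights along such an ordering so that each heavy vertex pays full price only for its boundedly many ``back'' neighbours, while its remaining heavy neighbours enter with damped weight, is what should push the constant below the generic degenerate estimate: merely invoking $5$-degeneracy together with the quoted $\sqrt{4d\DG}$ bound yields only $\sqrt{20\DG}$, so planarity must be exploited beyond degeneracy to improve the constant from $20$ to $8$. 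The finitely many boundary defects introduced by the ordering are what I expect to absorb into the additive ``$+10$''.

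The main obstacle is exactly this heavy--heavy interaction. A degeneracy ordering bounds the \emph{number} of back-neighbours of each heavy vertex but says nothing about their position in the order, so a naive geometric weighting can be defeated by a heavy vertex whose few back-neighbours lie far earlier (and hence carry very different weights), or by a heavy vertex with very many forward-neighbours. Making the weight assignment robust to both effects—most plausibly by tying it to a forest/planar decomposition of the heavy subgraph rather than to the raw linear order, and by carefully tracking how the bounded local defects accumulate into the additive constant—is the delicate step on which the precise value $\sqrt{8}$ depends, and it is where I expect the bulk of the technical work to lie.
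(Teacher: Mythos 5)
There is a genuine gap: what you have written is a strategy outline whose decisive step is explicitly deferred, not a proof. Moreover, the specific weightings you propose can be seen to fail. Take the Collatz--Wielandt bound with the degree-graded weights $y_v=\sqrt{d(v)}$ that $K_{2,n}$ suggests, and consider the planar ``spider'': a center $v$ of degree $\tau=\sqrt{\Delta}$, each neighbour of $v$ having $\Delta-1$ further leaves. The weighted row-sum at $v$ is $\frac{1}{\sqrt{\tau}}\cdot\tau\sqrt{\Delta}=\sqrt{\tau\Delta}=\Delta^{3/4}$, although this tree has $\rho\le 2\sqrt{\Delta-1}$; the two-level (heavy/light) scheme fails the same way, since a light vertex of degree $\approx\tau$ with all neighbours heavy forces the heavy/light ratio $s$ down to $O(1)$, whereupon a heavy vertex's light neighbours contribute $\approx\Delta/s=\Omega(\Delta)$. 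So no weighting that depends only on the degree of the vertex (or on finitely many degree levels) can give $O(\sqrt{\Delta})$: the weights must encode global structure. Your fallback --- ``tying the weights to a forest/planar decomposition of the heavy subgraph'' --- is exactly where the entire proof would have to live, and it is never constructed; as it stands, nothing beyond the trivial $\rho(G)\le\Delta(G)$ and the $\sqrt{20\Delta(G)}$ bound from $5$-degeneracy (which you yourself note is insufficient) is actually established, and the accounting of the ``boundary defects'' into the additive $10$ is likewise only asserted.

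For comparison, the actual proof in \cite{DM} (recalled in the paper right after the theorem statement) bypasses weightings entirely: the edges of a planar graph $G$ are partitioned into two trees $T_1,T_2$ of maximum degree at most $\Delta(G)/2$ plus a leftover graph $R$ of maximum degree bounded by a small constant. Then subadditivity of the spectral radius (Lemma~\ref{lem:1}(b)) together with the tree bound $\rho(T)\le 2\sqrt{\Delta(T)-1}$ gives
$\rho(G)\le \rho(T_1)+\rho(T_2)+\rho(R)\le 2\cdot 2\sqrt{\Delta(G)/2}+10=\sqrt{8\Delta(G)}+10$.
The constant $\sqrt{8}$ thus comes from the decomposition theorem, not from balancing row-sums, and the ``$+10$'' is simply $\rho(R)\le\Delta(R)$. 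If you wanted to salvage your approach, the natural move is to define the weights from such a two-tree decomposition --- but at that point the Collatz--Wielandt machinery adds nothing over plain subadditivity, which is the mechanism the paper uses.
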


The proof in \cite{DM} uses the fact that the edges of every planar graph
$G$ can be partitioned into two trees of maximum degree at most
$\DG /2$ and a graph whose degree is bounded by a small constant.
A similar bound was obtained earlier by Cao and Vince~\cite{CV}.

Whenever a result can be proved for tree-like graphs and for graphs of
bounded genus, it is natural to ask if it can be extended to a more general
setting of minor-closed families. Indeed, this is possible also in our case,
and a result of Hayes~\cite{Hayes} (see Theorem \ref{thm:Hayes} below)
goes even further.

A graph is said to be \DEF{$d$-degenerate} if every subgraph of $G$ has
a vertex whose degree is at most $d$. This condition is equivalent to
the requirement that $G$ can be reduced to the empty graph by successively
removing vertices whose degree is at most~$d$.

A requirement that is similar to degeneracy is existence of an orientation of
the edges of $G$ such that each vertex has indegree at most $d$. Every such graph
is easily seen to be $2d$-degenerate, and conversely, every $d$-degenerate graph
has an orientation with maximum indegree $d$.
%Hayes proved the following:

\begin{theorem}[Hayes~\cite{Hayes}]
\label{thm:Hayes}
Any graph $G$ that has an orientation with maximum indegree $d$
(hence also any $d$-degenerate graph) and with $\D=\DG\ge 2d$
satisfies $$\rho(G)\le 2\sqrt{d(\D-d)}.$$
\end{theorem}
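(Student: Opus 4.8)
The plan is to work directly with the adjacency matrix $A$ of $G$ and to use the orientation to split $A$ according to the direction of the edges. Fix an orientation with maximum indegree at most $d$, and let $B$ be the $0/1$ matrix with $B_{uv}=1$ precisely when the edge between $u$ and $v$ is oriented from $u$ to $v$; then $A=B+B^{\top}$, every column sum of $B$ (the indegree $d^-(v)$) is at most $d$, and the row sum of $B$ at $u$ is the outdegree $d^+(u)=\deg(u)-d^-(u)$. Since $A$ is real and symmetric, $\rho(G)$ equals its largest eigenvalue, so by the variational principle I take a unit vector $x$ with $\rho(G)=x^{\top}Ax$; replacing each $x_v$ by $|x_v|$ only increases the relevant sums, so I may assume $x\ge 0$. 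The starting identity is then
\[
\rho(G)=x^{\top}Ax=2\,x^{\top}Bx=2\sum_{u\to v}x_ux_v ,
\]
where the sum runs over the oriented edges.

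The core of the argument is a single application of Cauchy--Schwarz to this edge sum, arranged so as to separate the two endpoints according to the orientation. Writing $P=\sum_u d^+(u)\,x_u^2$ (each $x_u^2$ counted once per out-edge) and $Q=\sum_v d^-(v)\,x_v^2$ (each $x_v^2$ counted once per in-edge), Cauchy--Schwarz over the edge set gives
\[
\rho(G)=2\sum_{u\to v}x_ux_v\le 2\Bigl(\sum_{u\to v}x_u^2\Bigr)^{1/2}\Bigl(\sum_{u\to v}x_v^2\Bigr)^{1/2}=2\sqrt{P}\,\sqrt{Q}.
\]
Now I feed in the two structural constraints. The indegree bound yields $Q=\sum_v d^-(v)x_v^2\le d\sum_v x_v^2=d$, while adding the two weighted sums gives the coupling $P+Q=\sum_v\deg(v)\,x_v^2\le\DG\,\|x\|^2=\DG$, so $P\le\DG-Q$. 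Substituting, $\rho(G)\le 2\sqrt{(\DG-Q)\,Q}$, and since $Q\mapsto(\DG-Q)Q$ is increasing on $[0,\DG/2]$ and the hypothesis $\DG\ge 2d$ guarantees $Q\le d\le\DG/2$, the maximum is attained at $Q=d$, giving $\rho(G)\le 2\sqrt{d(\DG-d)}$.

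The step I expect to be the real obstacle --- and the reason a direct estimate falls short --- is that naive Cauchy--Schwarz together with $Q\le d$ and $P\le\DG$ only produces the weaker bound $2\sqrt{d\DG}$. Recovering the sharp factor $\DG-d$ requires noticing that $P$ and $Q$ cannot both be large simultaneously: they are coupled through $P+Q\le\DG$, which is a global consequence of the degree bound rather than anything one can read off edge by edge (indeed, a single source vertex may have outdegree $\DG$, so no per-vertex argument will improve $\DG$ to $\DG-d$). Retaining this joint constraint and only then optimizing the resulting one-variable expression over $Q\in[0,d]$ is precisely what yields the stated inequality. It is also worth noting that this argument uses nothing about the orientation being acyclic, so it applies verbatim to the full indegree-$d$ hypothesis rather than only to the $d$-degenerate special case.
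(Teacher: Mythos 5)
Your proof is correct, but note that there is nothing in the paper to compare it against: Theorem~\ref{thm:Hayes} is quoted from Hayes~\cite{Hayes} as a known external result, and the paper supplies no proof of it. Your argument is a complete, self-contained derivation of the cited bound. Each step checks out: writing $A=B+B^{\top}$ along the orientation and reducing to $\rho(G)=2\sum_{u\to v}x_ux_v$ for a nonnegative unit vector $x$ is sound (replacing entries by absolute values is harmless by the Rayleigh characterization); the Cauchy--Schwarz split into $P=\sum_u d^+(u)x_u^2$ and $Q=\sum_v d^-(v)x_v^2$ is exactly right; and the two constraints $Q\le d$ and $P+Q=\sum_v \deg(v)x_v^2\le\D$ combine with the monotonicity of $t\mapsto(\D-t)t$ on $[0,\D/2]$ --- which is precisely where the hypothesis $\D\ge 2d$ enters --- to give $\rho(G)\le 2\sqrt{d(\D-d)}$. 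Your closing diagnosis is also accurate on both counts: a per-vertex estimate such as $\rho(B)^2\le\Vert B\Vert_1\Vert B\Vert_\infty\le d\D$ only yields $2\sqrt{d\D}$ (a source may have outdegree $\D$), so the global coupling $P+Q\le\D$ is genuinely the crux; and since acyclicity of the orientation is never used, the argument proves the full indegree-$d$ statement, of which the $d$-degenerate case (orient each edge toward the endpoint deleted first in a degeneracy ordering) is the special case the present paper invokes.
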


It is well-known that each planar graph $G$ has an orientation with maximum
indegree $3$. Theorem \ref{thm:Hayes} thus implies that
$\rho(G)\le \sqrt{12(\D-3)}$, which is slightly weaker than the bound of
Theorem \ref{thm:planar} (for large $\D$).

The above results suggest the following definitions. We say that a graph $G$
is \DEF{spectrally $d$-degenerate} if every subgraph $H$ of $G$ has
spectral radius at most $\sqrt{d\D(H)}$. Hayes' Theorem \ref{thm:Hayes}
shows that $d$-degenerate graphs are spectrally $4d$-degenerate.
The implication is clear for graphs $G$ of maximum degree at least $2d$.
On the other hand, if $\DG\le2d$, then $\rho(G)\le \DG\le \sqrt{2d\DG}$.
The main result of this paper is a rough converse of this statement.

\begin{theorem}
\label{thm:almost degenerate}
If\/ $G$ is a spectrally $d$-degenerate graph, then it
contains a vertex whose degree is at most $\max\{4d,4d\log_2(\DG/d)\}$.
\end{theorem}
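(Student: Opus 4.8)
The plan is to prove the contrapositive. We may assume $\DG\ge 2d$, since otherwise every vertex has degree below $2d\le 4d$ and the conclusion is immediate. So suppose for contradiction that every vertex of $G$ has degree exceeding $k:=4d\log_2(\DG/d)$, and in particular $\delta(G)>k$. Because every subgraph of a spectrally $d$-degenerate graph is again spectrally $d$-degenerate, it suffices to exhibit a \emph{single} subgraph $H\subseteq G$ whose maximum degree $\Delta(H)$ we control and for which $\rho(H)>\sqrt{d\,\Delta(H)}$; this contradicts the defining inequality applied to $H$.

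The natural engine is the Collatz--Wielandt lower bound for the nonnegative symmetric matrix $A_H$: for any positive vector $x$ on $V(H)$ one has $\rho(H)\ge\min_{v}\tfrac{1}{x_v}\sum_{u\sim_H v}x_u$. Thus the whole problem reduces to building $H$ and positive weights $x$ with $\sum_{u\sim_H v}x_u\ge\sqrt{d\,\Delta(H)}\,x_v$ at every vertex $v$. It is worth noting up front why a crude choice of $x$ cannot work: taking $x$ constant recovers only the average-degree bound $\rho(H)\ge \bar d(H)$, which together with the hypothesis gives merely $\bar d(H)\le\sqrt{d\,\Delta(H)}$, i.e.\ a \emph{polynomial} relation $\delta\lesssim\sqrt{d\,\DG}$. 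The theorem asserts the vastly stronger \emph{logarithmic} relation $\delta\lesssim d\log_2(\DG/d)$, so the proof must exploit the hypothesis simultaneously at all degree scales rather than at a single one.

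Accordingly, I would organize $V(G)$ into dyadic degree classes $V_i=\{v: 2^i d\le \deg_G(v)<2^{i+1}d\}$ for $0\le i\le t:=\lceil\log_2(\DG/d)\rceil$. Since $\delta(G)>k=4dt$, all low classes are empty, and each vertex has more than $4dt$ incident edges distributed among at most $t+1$ classes; hence by pigeonhole every vertex $v$ has at least $2d$ neighbours lying in one common class $c(v)$. The idea is to assign geometrically decaying weights $x_v\propto 2^{-\mathrm{cl}(v)/2}$, so that high-degree vertices are down-weighted in exactly the proportion needed to offset their degree in the target bound $\sqrt{d\,\Delta(H)}$, and to restrict $H$ to a short band of consecutive scales so that $\Delta(H)=O(2^{j}d)$ while each vertex still retains its $\ge 2d$ heavy neighbours within the band. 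The factor $4d$ and the appearance of $\log_2(\DG/d)$ are then forced: covering the full degree range requires $t$ scales, and the budget $4d$ per scale is what the weighted neighbour-sum inequality can absorb.

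The step I expect to be the main obstacle is controlling $\Delta(H)$ and the weighted neighbour-sum \emph{at the same time}. Choosing, for each $v$, its $2d$ heavy neighbours fixes the outgoing contribution, but the incoming contribution (the ``back-edges'' from the possibly many lower-weight vertices that selected $v$) can inflate $\deg_H(v)$ toward $\DG$, destroying the bound on $\Delta(H)$. Overcoming this requires coupling the scales so that the per-scale gains telescope---for instance by realizing $H$ as a bipartite graph between two adjacent scales, or by iterating the extraction recursively down the scales so that the weak polynomial estimate available at one scale is amplified across the $\log_2(\DG/d)$ levels into the desired logarithmic bound. Making this telescoping precise, and verifying that the surviving degrees on both sides stay within a constant multiple of $2^{j}d$, is the technical heart of the argument.
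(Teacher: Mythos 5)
Your proposal is a plan rather than a proof: you correctly identify the right currency (dyadic degree scales, geometrically decaying weights, a Rayleigh/Collatz--Wielandt lower bound played against $\sqrt{d\Delta(H)}$), but you stop exactly where the paper's argument begins, explicitly deferring ``the technical heart.'' Worse, the specific single-subgraph route you sketch fails as stated. If $H$ lives in a band of scales topped by scale $j$, then $\Delta(H)\approx 2^jd$, and at a vertex $v$ whose $2d$ heavy neighbours lie in class $c(v)$ your weighted inequality $\sum_{u\sim_H v}x_u\ge\sqrt{d\Delta(H)}\,x_v$ with $x_w=2^{-\mathrm{cl}(w)/2}$ becomes $2d\cdot 2^{-c(v)/2}\ge\sqrt{d\cdot 2^jd}\cdot 2^{-j/2}=d$, which forces $c(v)\le 2$; the pigeonhole class, which you cannot control, would have to sit at the very bottom of the band. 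And the ``back-edges'' you flag are a real obstruction: the vertices selected as heavy neighbours can be chosen by unboundedly many selectors, inflating $\Delta(H)$ and raising the bar $\sqrt{d\Delta(H)}$ at every vertex. The hypothesis has to be exploited at \emph{all} scales simultaneously, not on one cleverly chosen $H$, and your sketch contains no mechanism for doing so.

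The paper resolves both difficulties with two ideas absent from your outline. First, a preprocessing step: with $r=\delta(G_0)$, repeatedly delete edges $xy$ with $\deg(x)\ge\deg(y)>r$, so that in the resulting graph $G$ the vertices of degree greater than $r$ form an independent set. This kills your back-edge problem at the root: every high class $V_i$ now sends all of its edges to the bottom class $V_0$ of vertices of degree exactly $r$, so the relevant subgraphs are star-shaped between $V_0$ and the higher classes. Second, instead of one violating subgraph, the paper considers the nested family $G_t$ induced on $V_0\cup V_1\cup\cdots\cup V_t$ (with edges inside $V_0$ removed), each of maximum degree at most $2^tr$, and lower-bounds $\rho(G_t)$ via a quotient sub-adjacency matrix $B_t$ (Lemma \ref{lem:4}) whose spectral radius is computable exactly: $\rho(B_t)^2=\sum_{i=1}^t2^{i-1}rb_{0i}$. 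Applying spectral $d$-degeneracy to \emph{every} $G_t$ yields $\sum_{i=1}^t2^{i-1}b_{0i}\le 2^td$ for each $t$, and a weighted summation/induction over $t$ converts these $l$ inequalities into $\sum_{i=1}^l b_{0i}\le(l+1)d$, which contradicts $\sum_{i\ge 1}b_{0i}\ge r/2$ (the bound $b_{00}\le r/2$ itself coming from spectral degeneracy of the induced subgraph on $V_0$). This cross-scale combination is precisely the telescoping you anticipated but did not construct, and the independence preprocessing is the missing structural idea that makes it possible; without both, your approach does not close.
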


The proof is given in Section \ref{sect:main}.
If it were not for the annoying factor of
$\log(\D)$, this would imply $f(d)$-degeneracy, which was our initial
hope. However, in Section~\ref{sect:LB} we construct examples
% showing that our bound is essentially best possible.
showing that the ratio between degeneracy and spectral degeneracy
may be as large as (almost) $\log\log \DG$.

In the last section, we consider computational complexity questions 
related to spectral degeneracy. First we prove that for every integer
$d\ge 3$, it is NP-hard to decide if the spectral degeneracy of a given
graph $G$ of maximum degree $d+1$ is at least $d$. Next we show that
the problem of deciding if a graph is spectrally $d$-degenerate is 
co-NP-complete.

\section{Spectral radius}

We refer to \cite{Bi,CDS,GoRo} for basic results about the spectra of finite graphs
and to \cite{HJ} for results concerning the spectral radius of (nonnegative) matrices.
Let us review only the most basic facts that will be used in this paper.
The spectral radius is monotone and subadditive. Formally this is stated in the following lemma.

\begin{lemma}
\label{lem:1}
{\rm (a)} If $H\subseteq G$, then $\rho(H)\le \rho(G)$.

{\rm (b)} If $G = K\cup L$, then $\rho(G)\le \rho(K) + \rho(L)$.
\end{lemma}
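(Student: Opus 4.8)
The plan is to derive both parts from the variational (Rayleigh) characterization of the largest eigenvalue together with the Perron--Frobenius theorem. The adjacency matrix $A(G)$ of a graph is real, symmetric, and entrywise nonnegative, so by Perron--Frobenius its spectral radius $\rho(G)$ equals its largest eigenvalue $\lambda_{\max}(A(G))$ and is attained by a nonnegative eigenvector. For every real symmetric matrix $A$ one also has the Rayleigh identity $\lambda_{\max}(A) = \max_{\|x\| = 1} x^{T} A x$. I would state these two facts at the outset and use them throughout.

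First I would isolate the following monotonicity principle for nonnegative symmetric matrices: if $0 \le A \le B$ entrywise, then $\rho(A) \le \rho(B)$. To prove it, let $x \ge 0$ be a unit Perron eigenvector of $A$, so that $\rho(A) = x^{T} A x$. Since every product $x_i x_j$ is nonnegative and $A_{ij} \le B_{ij}$, we obtain $x^{T} A x \le x^{T} B x$, and the Rayleigh identity applied to $B$ gives $x^{T} B x \le \lambda_{\max}(B) = \rho(B)$. Chaining these yields $\rho(A) \le \rho(B)$.

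Part (a) then follows immediately: viewing $A(H)$ as indexed by $V(G)$, with zero rows and columns for the vertices in $V(G) \setminus V(H)$ (which alters neither the nonzero spectrum nor $\rho(H)$), the inclusion $H \subseteq G$ gives $0 \le A(H) \le A(G)$ entrywise, so the principle yields $\rho(H) \le \rho(G)$. For part (b), the hypothesis $G = K \cup L$ means $E(G) = E(K) \cup E(L)$, whence $A(G) \le A(K) + A(L)$ entrywise, with possible strict inequality on edges shared by $K$ and $L$. Taking a unit Perron eigenvector $x \ge 0$ of $G$ and using $x_i x_j \ge 0$ together with the Rayleigh bound for each summand gives
$$\rho(G) = x^{T} A(G) x \le x^{T} A(K) x + x^{T} A(L) x \le \rho(K) + \rho(L).$$

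The single delicate point, and the only place where the argument could fail if handled carelessly, is the sign of the optimizing vector: an entrywise inequality between matrices tells us nothing when tested against a vector with negative coordinates, because the cross terms $x_i x_j$ could then be negative. What rescues both parts is the Perron--Frobenius guarantee that the extremal Rayleigh vector may be chosen nonnegative. Once that is in hand, the rest is routine manipulation of the Rayleigh quotient, and no estimates beyond the entrywise comparisons above are needed.
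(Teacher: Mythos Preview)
Your argument is correct. The paper, however, does not actually give a proof of this lemma: it simply records the statement as a basic fact about the spectral radius (monotonicity and subadditivity), with references to standard texts on spectra of graphs and nonnegative matrices. What you have written is precisely the standard Rayleigh--quotient/Perron--Frobenius argument that one would find in those references, so there is no discrepancy in approach to discuss; you have supplied the proof that the paper chose to omit.

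One very minor remark: for part~(b) you implicitly view $A(K)$ and $A(L)$ as matrices indexed by $V(G)$, padding with zeros for absent vertices, exactly as you did explicitly for part~(a). It would do no harm to say so. Apart from that cosmetic point, nothing is missing.
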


The application of Lemma \ref{lem:1}(a) to the subgraph of $G$ consisting of
a vertex of degree $\Delta(G)$ together with all its incident edges gives
a lower bound on the spectral radius in terms of the maximum degree.

\begin{lemma}
\label{lem:2}
$\sqrt{\DG} \le \rho(G)\le \DG$.
\end{lemma}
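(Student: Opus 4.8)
The plan is to prove the two inequalities separately, each by a short and standard argument. For the upper bound $\rho(G)\le\DG$ I would argue directly from the eigenvalue equation for the adjacency matrix $A$ of $G$. Let $\lambda$ be any eigenvalue of $A$ (so $\rho(G)=\max|\lambda|$) with a nonzero eigenvector $x$, and choose a vertex $v$ maximizing $|x_v|$; note $|x_v|>0$ since $x\neq 0$. From $\lambda x_v=\sum_{u\sim v}x_u$ and the triangle inequality,
$$|\lambda|\,|x_v|=\Bigl|\sum_{u\sim v}x_u\Bigr|\le\sum_{u\sim v}|x_u|\le\deg(v)\,|x_v|\le\DG\,|x_v|,$$
and dividing by $|x_v|$ gives $|\lambda|\le\DG$. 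Taking the maximum over all eigenvalues yields $\rho(G)\le\DG$. (Equivalently, one may invoke the standard fact for nonnegative matrices that the spectral radius is bounded by the maximum row sum, which here equals $\DG$.)

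For the lower bound $\sqrt{\DG}\le\rho(G)$ I would follow the hint stated just before the lemma: let $v$ be a vertex of degree $\DG$, and let $H\subseteq G$ be the subgraph consisting of $v$ together with all its incident edges, i.e.\ the star $K_{1,\DG}$. A one-line spectral computation identifies $\rho(H)$: for an eigenvector with eigenvalue $\lambda\neq 0$, the equations at the center and at a leaf force $\lambda x_{\ell}=x_v$ and $\lambda x_v=\sum_\ell x_\ell=\DG\,(x_v/\lambda)$, so $\lambda^2=\DG$ and hence $\rho(H)=\sqrt{\DG}$. Since $H\subseteq G$, Lemma \ref{lem:1}(a) gives $\sqrt{\DG}=\rho(H)\le\rho(G)$, completing the proof.

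I do not anticipate a genuine obstacle here, since both bounds are elementary; the only points requiring a modicum of care are ensuring $|x_v|>0$ when normalizing in the upper bound, and correctly computing the star's spectral radius in the lower bound. The key structural idea—extracting a star on a maximum-degree vertex and combining monotonicity (Lemma \ref{lem:1}(a)) with the exact value $\rho(K_{1,\DG})=\sqrt{\DG}$—is exactly what the sentence preceding the statement flags, so the proof is essentially forced.
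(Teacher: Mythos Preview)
Your proposal is correct and follows exactly the approach the paper indicates: the lower bound via the star $K_{1,\DG}$ on a maximum-degree vertex together with monotonicity (Lemma~\ref{lem:1}(a)) is precisely what the sentence preceding the lemma sketches, and your upper-bound argument is the standard row-sum/eigenvector bound that the paper takes for granted. There is nothing to add or correct.
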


A partition $V(G) = V_1\cup \cdots \cup V_k$ of the vertex set of $G$ is called
an \DEF{equitable partition} if, for every $i,j\in \{1,\dots,k\}$, there exists an integer $b_{ij}$ such that every vertex $v\in V_i$ has precisely $b_{ij}$
neighbors in $V_j$. The $k\times k$ matrix $B=[b_{ij}]$ is called
the \DEF{quotient adjacency matrix} of $G$ corresponding to this equitable
partition.

\begin{lemma}
\label{lem:3}
Let\/ $B$ be the quotient adjacency matrix corresponding to an equitable
partition of\/ $G$. Then every eigenvalue of $B$ is also an eigenvalue of $G$,
and $\rho(G)=\rho(B)$.
\end{lemma}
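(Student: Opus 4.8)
The plan is to work with the characteristic matrix of the partition and the matrix identity it satisfies. Let $A$ denote the adjacency matrix of $G$, let $n=|V(G)|$, and let $S$ be the $n\times k$ matrix whose $j$-th column is the indicator vector of $V_j$ (so $S_{vj}=1$ if $v\in V_j$ and $0$ otherwise). Since the parts are nonempty and pairwise disjoint, the columns of $S$ are nonzero and have pairwise disjoint supports, hence $S$ has full column rank $k$. The defining property of an equitable partition translates precisely into the identity $AS=SB$: the $(v,j)$ entry of $AS$ counts the neighbors of $v$ lying in $V_j$, which for $v\in V_i$ equals $b_{ij}$, and this is exactly the $(v,j)$ entry of $SB$.

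The first assertion is then immediate. If $x\in\RR^k$ is an eigenvector of $B$ with $Bx=\lambda x$, then $A(Sx)=(AS)x=(SB)x=\lambda(Sx)$, and $Sx\ne 0$ because $S$ is injective; hence $\lambda$ is an eigenvalue of $A$, i.e.\ of $G$. Taking the maximum modulus over all eigenvalues of $B$, this also yields $\rho(B)\le\rho(G)$.

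For the reverse inequality I would exploit the symmetry of $A$. The identity $AS=SB$ shows that the column space $W=\{Sx:x\in\RR^k\}$ --- which is exactly the space of vectors that are constant on each part --- is $A$-invariant. Since $A$ is symmetric, the orthogonal projection $P$ onto $W$ commutes with $A$. By the Perron--Frobenius theorem, $\rho(G)=\rho(A)$ is an eigenvalue of the nonnegative matrix $A$ admitting a nonnegative eigenvector $z\ge 0$, $z\ne 0$. Then $Pz\in W$ satisfies $A(Pz)=P(Az)=\rho(G)\,Pz$; moreover $Pz$ is obtained from $z$ by averaging over each part, so it is again nonnegative and, since $z\ne 0$, nonzero. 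Writing $Pz=Sx$ with $x\ne 0$ (using injectivity of $S$), the relation $SBx=ASx=\rho(G)\,Sx$ forces $Bx=\rho(G)\,x$. Thus $\rho(G)$ is an eigenvalue of $B$, giving $\rho(G)\le\rho(B)$, and combined with the previous paragraph $\rho(G)=\rho(B)$.

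The only delicate point is the reverse inequality: one must guarantee that the top eigenvalue of $A$ is actually attained by an eigenvector lying in $W$ (equivalently, constant on the parts) rather than being hidden in $W^\perp$. The symmetry of $A$, which makes $P$ commute with $A$, together with the nonnegativity of the Perron vector, which ensures the projected vector does not vanish, is exactly what makes this work; without symmetry one would obtain only the inclusion of spectra and the one-sided bound $\rho(B)\le\rho(G)$.
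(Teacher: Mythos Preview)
Your argument is correct. The first half --- lifting eigenvectors of $B$ via the characteristic matrix $S$ --- is exactly the paper's approach written in matrix form. For the equality $\rho(G)=\rho(B)$, however, you and the paper go in opposite directions. The paper assumes $G$ is connected, applies Perron--Frobenius to $B$ to obtain a \emph{positive} eigenvector for $\rho(B)$, lifts it to a positive eigenvector of $G$, and then invokes the uniqueness part of Perron--Frobenius (a positive eigenvector of a connected graph must be the Perron vector) to conclude that this lift realizes $\rho(G)$. You instead apply Perron--Frobenius to $A$, use the symmetry of $A$ to see that the orthogonal projection $P$ onto the $A$-invariant subspace $W=\operatorname{im}S$ commutes with $A$, and project the nonnegative Perron vector of $A$ down to $W$ to exhibit $\rho(G)$ as an eigenvalue of $B$. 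Your route has the minor advantage of not needing the connectedness reduction or the uniqueness clause of Perron--Frobenius; the paper's route is perhaps more direct once one is willing to assume connectedness, since it avoids the projection/commutation step.
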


\begin{proof}
The first claim is well known (see \cite{GoRo} for details).
To prove it, one just lifts an eigenvector $y$ of $B$ to an eigenvector
$x$ of $G$ by setting $x_v = y_i$ if $v\in V_i$. By the Perron-Frobenius
Theorem, the eigenvector corresponding to the largest eigenvalue of $B$
is positive (if $G$ is connected, which we may assume), so its lift
is also a positive eigenvector of $G$. This easily implies (by using the
Perron-Frobenius Theorem and orthogonality of eigenvectors of $G$)
that this is the eigenvector corresponding
to the largest eigenvalue of $G$. Thus, $\rho(G)=\rho(B)$.
\end{proof}

We will need an extension of Lemma \ref{lem:3}. As above, let
$V(G) = V_1\cup \cdots \cup V_k$ be a partition of $V(G)$, and let
$n_i=|V_i|$, $1\le i\le k$. For every $i,j\in\{1,\dots,k\}$, let
$e_{ij}$ denote the number of ordered pairs $(u,v)$ such that $u\in V_i$,
$v\in V_j$ and $uv\in E(G)$, i.e.~$e_{ij}$ is the number of edges between
$V_i$ and $V_j$ if $i\ne j$, and is twice the number of edges between the
vertices in $V_i$ if $i=j$. Let $b_{ij} = e_{ij}/n_i$ and let $B=[b_{ij}]$
be the corresponding $k\times k$ matrix. This is a generalization from
equitable to general partitions, so we say that $B$ is
the \DEF{quotient adjacency matrix} of $G$ also in this case.
If a matrix $B'=[b'_{ij}]_{i,j=1}^k$ satisfies $0\le b'_{ij}\le b_{ij}$
for every pair $i,j$, then we say that $B'$ is a \DEF{quotient sub-adjacency matrix}
for the partition $V_1\cup \cdots \cup V_k$.

\begin{lemma}
\label{lem:4}
If\/ $B'$ is a quotient sub-adjacency matrix corresponding to a partition
of\/ $V(G)$, then $\rho(G)\ge\rho(B')$.
\end{lemma}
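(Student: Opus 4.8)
My plan is to prove this by reducing the general-partition statement to the equitable-partition statement of Lemma~\ref{lem:3}, and then handling the ``sub-adjacency'' part by a monotonicity argument. First I would recall the variational (Rayleigh-quotient) characterization of the spectral radius for nonnegative symmetric matrices: since $B'$ need not be symmetric, the relevant quantity is the Perron root, and by Perron--Frobenius the largest eigenvalue $\rho(B')$ of the nonnegative matrix $B'$ is characterized by $\rho(B')=\max_{y>0}\min_i \tfrac{1}{y_i}\sum_j b'_{ij}y_j$. Similarly, $\rho(G)$ admits the Rayleigh characterization $\rho(G)=\max_{x\ne 0}\tfrac{x^{\top}A x}{x^{\top}x}$, where $A$ is the adjacency matrix of $G$. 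The strategy is to take a Perron eigenvector $y=(y_1,\dots,y_k)$ of $B'$ and \emph{lift} it to a nonnegative test vector $x$ on $V(G)$ by setting $x_v=y_i$ for every $v\in V_i$, exactly as in the proof of Lemma~\ref{lem:3}, and then to estimate the Rayleigh quotient $\tfrac{x^{\top}Ax}{x^{\top}x}$ from below.

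The key computation is to evaluate the two quadratic forms on the lifted vector. For the denominator, since $x$ is constant on each block, $x^{\top}x=\sum_i n_i y_i^2$. For the numerator, grouping the contributions of edges by the block-pair they join, one gets $x^{\top}Ax=\sum_{i,j} e_{ij}\,y_i y_j = \sum_{i,j} n_i b_{ij}\,y_i y_j$, using the definition $b_{ij}=e_{ij}/n_i$. Now I would invoke the sub-adjacency hypothesis $0\le b'_{ij}\le b_{ij}$ together with the nonnegativity of $y$ to replace $b_{ij}$ by $b'_{ij}$ in a favorable direction, yielding $x^{\top}Ax\ge\sum_{i,j}n_i b'_{ij}\,y_i y_j$. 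The point is that $\sum_j b'_{ij}y_j=\rho(B')\,y_i$ because $y$ is a Perron eigenvector of $B'$, so the last sum collapses to $\sum_i n_i\,y_i\,\rho(B')\,y_i=\rho(B')\sum_i n_i y_i^2=\rho(B')\,(x^{\top}x)$. Combining, $\tfrac{x^{\top}Ax}{x^{\top}x}\ge\rho(B')$, and the Rayleigh bound gives $\rho(G)\ge\rho(B')$.

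The main technical obstacle I anticipate is the asymmetry of $B'$: the eigenvalue equation $B'y=\rho(B')y$ used above is a \emph{right} eigenvector identity, whereas the symmetric quadratic form $x^{\top}Ax$ naturally wants $\sum_i b'_{ij}y_i$ rather than $\sum_j b'_{ij}y_j$. To make the collapse work cleanly I expect to symmetrize, for instance by writing $x^{\top}Ax=\tfrac12\sum_{i,j}n_i b_{ij}y_iy_j+\tfrac12\sum_{i,j}n_j b_{ji}y_jy_i$ and noting that $n_i b_{ij}=e_{ij}=e_{ji}=n_j b_{ji}$ so the form is automatically symmetric in the edge counts; the same identity $n_i b'_{ij}\le n_i b_{ij}=e_{ij}$ need not be symmetric in $b'$, so I should instead apply the bound directly to the symmetric combination $\tfrac12(n_i b'_{ij}+n_j b'_{ji})\,y_iy_j$ and verify that the Perron identity still produces $\rho(B')\,x^{\top}x$ after symmetrization. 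A secondary point to address is positivity: Perron--Frobenius guarantees a nonnegative eigenvector, which is all that is needed for the inequality-preserving step $b'_{ij}\le b_{ij}$ under multiplication by $y_iy_j\ge0$; I would note that a reduction to the connected (hence irreducible) case, as in Lemma~\ref{lem:3}, lets me assume $y>0$ if strictness is ever required, though for the weak inequality $\rho(G)\ge\rho(B')$ nonnegativity suffices.
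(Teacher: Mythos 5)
Your proposal is correct and takes essentially the same route as the paper: both lift a Perron eigenvector to a blockwise-constant test vector $x_v=y_i$ and bound $\rho(G)$ below by the Rayleigh quotient, computing $x^{\top}Ax=\sum_{i,j}e_{ij}y_iy_j=\sum_i n_iy_i\sum_j b_{ij}y_j$; the only cosmetic difference is that the paper first replaces $B'$ by $B$ via monotonicity of the spectral radius of nonnegative matrices and then obtains exact equality $\langle f|Af\rangle=\rho(B)\Vert f\Vert^2$, while you keep $B'$ and absorb the entrywise inequality into the quadratic form. Your anticipated asymmetry obstacle is in fact a non-issue: the weight $n_i$ attaches to the row index, so $\sum_{i,j}n_ib'_{ij}y_iy_j=\sum_i n_iy_i\bigl(\sum_j b'_{ij}y_j\bigr)$ collapses directly by the right-eigenvector identity $B'y=\rho(B')y$, and your symmetrized sum equals the unsymmetrized one by reindexing, so no repair is needed.
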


\begin{proof}
By the monotonicity of the spectral radius, $\rho(B')\le\rho(B)$, where $B$ is
the quotient adjacency matrix. So we may assume that $B'=B$.
The matrix $B$ is element-wise non-negative. By the Perron-Frobenius Theorem,
its spectral radius $\rho(B)$ is equal to the largest eigenvalue
of $B$ (which is real and positive) and the corresponding eigenvector
$y$ is non-negative.
Let us define the vector $f \in \RR^{V(G)}$ by setting $f_v=y_i$ if $v\in V_i$.
Then
$$
  \Vert f\Vert^2=\sum_{v\in V(G)}f_v^2
         =\sum_{i=1}^k n_i y_i^2.
$$
Furthermore, if $A$ is the adjacency matrix of $G$, then
\begin{eqnarray*}
  \langle f|Af\rangle &=& 2\sum_{uv\in E(H)} f_uf_v\\
      &=& \sum_{i=1}^k \sum_{j=1}^k e_{ij}y_iy_j \\
      &=& \sum_{i=1}^k n_iy_i \sum_{j=1}^k b_{ij}y_j \\
      &=& \rho(B)\sum_{i=1}^k n_iy_i^2 \\[1mm]
      &=& \rho(B)\Vert f\Vert^2.
\end{eqnarray*}
Since the matrix $A$ is symmetric, $\rho(A)$ is equal to the numerical radius
of $A$. Thus, it follows from the above calculations that
$\rho(G)\ge \frac{\langle f|Af\rangle}{\Vert f\Vert^2} = \rho(B)$,
which we were to prove.
\end{proof}

%We shall use a strengthening of this fact when we impose an upper bound on
%the product $\deg(u)\cdot \deg(v)$ taken over all edges $uv\in E(G)$.
%
%\begin{lemma}
%\label{lem:4}
%Let $G$ be a graph and suppose that $\deg(u)\cdot \deg(v)\le q$ for every
%edge $uv\in E(G)$. Then $\rho(G)\le sqrt{q}$.
%\end{lemma}
%
%\begin{proof}
%By Theorem \ref{thm:basic}, we may assume that $G$ is finite.
%Let $A$ be the adjacency matrix of $G$.
%It is well-known that $tr(A^{2k}) = W_{2k}$ is the number of closed walks of length
%$2k$ in $G$. Starting at any vertex, there are at most $q$ possibilities to make
%a walk of length 2. This implies that $W_{2k} \le q^k$.
%Since the trace of the matrix $A^{2k}$ is equal to the sum of the $(2k)$th powers
%of the eigenvalues of $A$, it is easy to see that
%$\rho(G) = \lim_{k\to \infty} tr(A^{2k})^{1/(2k)} \le \sqrt{q}$.
%\end{proof}

\section{Spectrally degenerate graphs are nearly degenerate}

\label{sect:main}

In this section we prove our main result, Theorem \ref{thm:almost degenerate}.
For convenience we state it again (in a slightly different form).

\begin{theorem}
\label{thm:almost degenerate1}
Let\/ $G_0$ be a spectrally $d$-degenerate graph with $r=\delta(G_0)>4d$.
Then $r \le 4d\log_2(\D(G_0)/d)$.
\end{theorem}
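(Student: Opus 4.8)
\section*{Proof proposal}

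The plan is to prove Theorem~\ref{thm:almost degenerate1} by an iterative halving of the maximum degree, exploiting the fact that spectral $d$-degeneracy is a hereditary property: since the defining condition quantifies over \emph{all} subgraphs $H$, every subgraph of a spectrally $d$-degenerate graph is again spectrally $d$-degenerate. Thus one reduction step can be applied over and over. Concretely, I would aim to establish the following single step: if $G$ is spectrally $d$-degenerate with $\delta(G)>4d$, then $G$ contains a subgraph $G'$ with $\Delta(G')\le \Delta(G)/2$ and $\delta(G')\ge \delta(G)-4d$. Granting this, the theorem follows by arithmetic. Starting from $G_0$ with $\delta_0=r$ and $\Delta_0=\D(G_0)$ and iterating as long as the current minimum degree exceeds $4d$, after $k$ steps we obtain a nonempty graph with $\delta_k\ge r-4dk$ and $\Delta_k\le \D(G_0)/2^k$. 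Every nonempty graph satisfies $\Delta\ge\delta$, so $r-4dk\le \D(G_0)/2^k$, i.e.\ $r\le 4dk+\D(G_0)/2^k$; choosing $k$ close to $\log_2(\D(G_0)/d)$ makes the second term $O(d)$ and yields $r\le 4d\log_2(\D(G_0)/d)$ (with the $\max\{4d,\cdot\}$ and the precondition $\delta>4d$ absorbing the lower-order terms).

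The engine of the halving step should be the observation that the set $X$ of \emph{high-degree} vertices (those of degree $>\Delta(G)/2$) is automatically removed by passing to $G-X$, since every remaining vertex then has degree at most $\Delta(G)/2$; the only thing to control is the loss in minimum degree, which is governed by the number of edges a vertex sends into $X$. Here is where the spectral hypothesis enters. Let $Z=V(G)\setminus X$ and consider the bipartite subgraph $H$ consisting of the edges between $X$ and $Z$. Applying Lemma~\ref{lem:4} to the partition $(X,Z)$ gives the lower bound
\[
  \rho(H)\ \ge\ \frac{e(X,Z)}{\sqrt{|X|\,|Z|}},
\]
while spectral $d$-degeneracy together with Lemma~\ref{lem:2} forces $\rho(H)\le \sqrt{d\,\Delta(H)}\le\sqrt{d\,\Delta(G)}$. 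Combining these, together with the fact that each vertex of $X$ has degree exceeding $\Delta(G)/2$ (and handling the edges internal to $X$ by the same spectral bound applied to $G[X]$), I expect to deduce that $X$ is small, namely $|X|=O\!\bigl(d|Z|/\Delta(G)\bigr)$. This bounds the \emph{average} degree loss $e(X,Z)/|Z|$ by a constant multiple of $d$, so that after discarding the few vertices of $Z$ that send many edges into $X$ and then iteratively peeling off vertices whose current degree drops below $\delta(G)-4d$, a nonempty subgraph with minimum degree at least $\delta(G)-4d$ and maximum degree at most $\Delta(G)/2$ survives.

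The main obstacle, and the part deserving the most care, is the quantitative form of this step: passing from a bound on the \emph{average} degree loss to a guarantee for \emph{every} surviving vertex, and verifying that the peeling process does not exhaust the graph. The averaging (Markov-type) argument only controls the proportion of vertices losing more than $4d$ edges to $X$, so the peeling must be analysed to show that a positive fraction remains; this is exactly the place where the constant in ``$4d$ per halving'' is pinned down, and where the interplay between Lemma~\ref{lem:4} and the degree thresholds must be balanced so that the spectral radius of the extracted bipartite subgraph genuinely exceeds $\sqrt{d\,\Delta(H)}$ whenever the loss is too large. I would first carry out the step allowing a generous constant (say, loss at most $Cd$ for some absolute $C$) to confirm the logarithmic dependence, and only then optimise the construction of $H$ and the peeling threshold to bring the constant down to the value $4d$ required for the stated bound.
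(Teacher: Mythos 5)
Your single-scale halving step is where the argument breaks, and the obstacle you flag in your last paragraph is not a technicality to be optimised later but the actual crux, which the tools you invoke cannot resolve. The average-loss computation itself is sound: with $X=\{v:\deg v>\Delta/2\}$ and $Z=V(G)\setminus X$, Lemma~\ref{lem:4} applied to the $(X,Z)$-bipartite subgraph together with the spectral bound on $G[X]$ does give $|X|=O(d|Z|/\Delta)$ and $e(X,Z)\le 4d|Z|$ once $\Delta\ge 16d$. But to iterate you need a \emph{nonempty} subgraph of $G-X$ with minimum degree $\ge \delta-O(d)$, and neither of the two available routes delivers it. Falling back on ordinary degeneracy (average degree of $G-X$ is $\ge r-4d$, hence some subgraph has minimum degree $\ge (r-4d)/2$) loses a factor of $2$ per halving, which over $\log_2(\Delta/d)$ rounds collapses the minimum degree geometrically and destroys the bound. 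The alternative, a Markov-plus-peeling argument, runs into an uncontrolled cascade: the initial bad set $Z_1$ of vertices with more than $Cd$ edges into $X$ is indeed a small fraction of $Z$, but when you discard it, the next layer is only constrained by spectral degeneracy of bipartite pieces \emph{inside} $Z$, whose maximum degree is still as large as $\Delta/2$. Quantitatively, for the bipartite graph between $Z_1$ and the set $Z_2$ of vertices with more than $Cd$ edges into $Z_1$, Lemma~\ref{lem:4} gives $(Cd)\cdot e(Z_1,Z_2)/|Z_1|\le \rho^2\le d\Delta/2$, so each discarded vertex can trigger roughly $\Delta/(C^2d)\gg 1$ new discards; the cascade has branching factor growing with $\Delta/d$, and iterating the estimate blows up rather than converging. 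So the halving lemma as stated is unsubstantiated, and single-scale spectral information appears genuinely insufficient to prove it.

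The paper sidesteps extraction and peeling entirely, and the difference is instructive. After a preprocessing step that deletes edges joining two vertices both of degree $>r$ (so the high-degree vertices become independent while minimum degree and spectral degeneracy are preserved), it partitions $V(G)$ into \emph{all} dyadic degree classes $V_0,V_1,\dots,V_l$ at once, where $V_0$ is the class of minimum-degree vertices, and applies Lemma~\ref{lem:4} not to one bipartite graph but to the whole family of star-shaped quotient sub-adjacency matrices of the subgraphs induced on $V_0\cup V_1\cup\dots\cup V_t$, for every $t$. Each scale yields the weighted constraint $\sum_{i=1}^t 2^{i-1}b_{0i}\le 2^t d$, and a summation over scales converts these into $\sum_{i=1}^s b_{0i}\le (s+1)d$, i.e.\ a direct numerical bound on $r=\sum_i b_{0i}$ with no surviving subgraph ever needed. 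The simultaneous constraints at intermediate scales are exactly what controls the vertices of intermediate degree that defeat your cascade analysis. If you want to salvage your scheme, you would have to import this multi-scale information into the peeling step, at which point you have essentially reconstructed the paper's proof.
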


\begin{proof}
Suppose for a contradiction that $r > 4d\log_2(\D(G_0)/d) \ge 4d$.
Let $G$ be a subgraph of $G_0$ obtained by successively deleting edges $xy$
for which $\deg(x)\ge\deg(y)>r$, as long as possible. Then $G$ has the following
properties:
\begin{itemize}
\item[(a)] $\delta(G) = r > 4d\log_2(\D(G_0)/d) \ge 4d\log_2(\DG/d)$.
\item[(b)] $G$ is spectrally $d$-degenerate.
\item[(c)] The set of vertices of $G$ whose degree is bigger than $r$ is
an independent vertex set in $G$.
\end{itemize}
Our goal is to prove that $r \le 4d\log_2(\DG/d)$. This will contradict
(a) and henceforth prove the theorem.

Let us consider the vertex partition into the following vertex sets:
$$V_0 = \{v\in V(G)\mid \deg_G(v)=r\},$$
and for $i=1,\dots,l$,
$$V_i = \{v\in V(G)\mid 2^{i-1}r < \deg_G(v) \le 2^i r\},$$
where $l = \lceil \log_2(\DG/r) \rceil \le \log_2(\DG/d)$.
Let $B = [b_{ij}]_{i,j=0}^l$ be the quotient adjacency matrix for the
partition $V_0,V_1,\dots,V_l$ of $V(G)$. Since all vertices in $V_0$ have
the same degree $r$, it follows from the definitions of the entries of
$B$ that $r = \sum_{i=0}^l b_{0i}$. Thus it suffices to estimate the
entries $b_{0i}$ in order to bound $r$.

For $i=0$, let $H\subseteq G$ be the induced subgraph of $G$ on $V_0$.
Since $G$ is spectrally $d$-degenerate, we have that
$\rho(H)\le\sqrt{d\D(H)}\le \sqrt{dr} \le \sqrt{r^2/4} = \tfrac{r}{2}$.
On the other hand, since $H$ has average degree $b_{00}$, we have
$\rho(H)\ge b_{00}$. Thus, $b_{00} \le \tfrac{r}{2}$.
This shows that $\sum_{i=1}^l b_{0i}=r - b_{00}\ge r/2$, and thus
it suffices to prove that
\begin{equation}
\sum_{i=1}^l b_{0i}\le 2d\log_2(\DG/d).
\label{eq:zero}
\end{equation}

From now on we let $B'$ be the matrix obtained from $B$ by setting the entry
$b_{00}'$ to be $0$. This is the quotient adjacency matrix of
the subgraph $G'$ of $G$ obtained
by removing edges between pairs of vertices in $V_0$.

We shall now prove that
\begin{equation}
  \sum_{i=1}^t 2^{i-1}b_{0i}\le 2^t d
\label{eq:first}
\end{equation}
for every $t=1,\dots,l$. Let us consider the subgraph $G_t$ of $G'$
induced on $V_0\cup V_1\cup \cdots\cup V_t$ and the corresponding matrix
$$
   B_t = \left[\begin{matrix}
            0 & b_{01} & \dots & b_{0t} \\
            r & 0 & \dots & 0 \\
            2r & 0 & \dots & 0 \\
            4r & 0 & \dots & 0 \\
            \vdots & \vdots & \ddots & \vdots \\
            2^{t-1}r & 0 & \dots & 0
         \end{matrix}\right]
$$
Let us observe that the entries $2^{i-1}r$ ($i=1,\dots,t$) in the first column
of $B_t$ are smaller than the corresponding entries in $B'$ because every
vertex in $V_i$ has degree more than $2^{i-1}r$. Therefore, $B_t$ is a
quotient sub-adjacency matrix for the subgraph $G_t$.
By expanding the determinant of the matrix $\lambda I - B_t$, it is easy to
see that
\begin{equation}
  \rho(B_t)^2 = \sum_{i=1}^t 2^{i-1}r b_{0i}.
  \label{eq:2a}
\end{equation}
Using Lemma \ref{lem:4} and
the fact that $G_t$ is spectrally $d$-degenerate, we see that
$\rho(B_t)^2\le \rho(G_t)^2\le d\cdot2^t r$. This inequality combined with
(\ref{eq:2a}) implies~(\ref{eq:first}).

We shall now prove by induction on $s$ that
\begin{equation}
  \sum_{i=1}^s b_{0i}\le (s+1)d
\label{eq:firstx}
\end{equation}
for every $s=1,\dots,l$. For $s=1$, this is the same as the inequality (\ref{eq:first})
taken for $t=1$. For $s\ge2$, we apply inequalities (\ref{eq:first}) to get
the following estimates:
\begin{equation}
  2^{s-t}\sum_{i=1}^t 2^{i-1}b_{0i}\le 2^s d
\label{eq:second}
\end{equation}
and henceforth
\begin{equation}
  \sum_{t=1}^s 2^{s-t}\sum_{i=1}^t 2^{i-1}b_{0i}\le s \cdot 2^s d.
\label{eq:third}
\end{equation}
Finally, inequality (\ref{eq:second}) (taken with $t=s$) and (\ref{eq:third}) imply
\begin{eqnarray*}
 2^s\sum_{i=1}^s b_{0i}
   &=& \sum_{i=1}^s \Bigl( 2^{i-1} + \sum_{j=i}^s 2^{j-1} \Bigr) b_{0i} \\
   &=& \sum_{i=1}^s 2^{i-1} b_{0i} +
       \sum_{t=1}^s 2^{s-t}\sum_{i=1}^t 2^{i-1}b_{0i} \\[2.5mm]
   &\le& 2^s d + s \cdot 2^s d = 2^s(s+1)d.
\end{eqnarray*}
This proves (\ref{eq:firstx}).  For $s=l$, this implies (\ref{eq:zero}) and completes the proof of the theorem.
\end{proof}

\section{A lower bound}

\label{sect:LB}

In this section we show that the $\log(\D)$ factor in the bound given by Theorem~\ref{thm:almost degenerate} cannot be eliminated entirely.
%is tight up to a small multiplicative factor.

Let $\alpha\in \RR_+$. We say that a graph $G$ is \DEF{$\alpha$-log-sparse},
shortly \DEF{$\alpha$-LS}, if every subgraph $H$ of $G$ has average degree at
most $\alpha \log(\D(H))$. Observe that being $\alpha$-LS is a hereditary
property and that every $\alpha$-LS graph $G$ is $\alpha\log(\DG)$-degenerate.

Pyber, R\"odl, and Szemer\'edi \cite[Theorem 2]{PRS} proved that there exists a
constant $\alpha_0$ such that every graph $G$ with average degree at least
$\alpha_0\log(\D(G))$ contains a 3-regular subgraph. On the other hand, they
proved in the same paper \cite{PRS} that there exists a constant $\beta>0$ such
that, for each $n\ge1$, there is a bipartite graph of order $n$ with average
degree at least $\beta \log\log n$ which does not contain any 3-regular
subgraph (and is hence $\alpha_0$-LS). These results establish the following.

\begin{theorem}[\cite{PRS}]
\label{thm:PRS}
There exist constants $\alpha_0,\beta_0>0$ such that for every integer
$\tau>1$ there exists a bipartite graph $G$ with bipartition $V(G)=A\cup B$ with the following properties:
\begin{itemize}
\item[\rm (a)] $G$ is $\alpha_0$-LS.
\item[\rm (b)] $|A|\ge |B|$ and every vertex in $A$ has degree $\tau$.
\item[\rm (c)] $\beta_0\log\log |A|\le \tau$.
\end{itemize}
\end{theorem}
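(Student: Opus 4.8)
The plan is to take $G$ to be a trimmed subgraph of one of the extremal graphs furnished by \cite{PRS}. Fix $\beta>0$ as in the lower-bound construction of \cite{PRS}, and let $n$ be the least integer for which $\beta\log\log n\ge 2\tau$, so that $\log\log n$ is of order $\tau/\beta$. Let $G_0$ be the corresponding bipartite graph of order $n$: it has average degree at least $\beta\log\log n\ge 2\tau$ and contains no $3$-regular subgraph, and is therefore $\alpha_0$-LS. Property (a) then comes essentially for free, since being $\alpha$-LS is a hereditary property: whatever subgraph $G\subseteq G_0$ we end up keeping is automatically $\alpha_0$-LS. Thus all the real work lies in arranging the degree and cardinality conditions (b) and (c) while staying inside $G_0$.

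For (b), I would first pass to a subgraph of large minimum degree by the usual degeneracy argument: repeatedly delete a vertex of current degree less than $\tau$. Since at most $n$ vertices are deleted and each deletion removes fewer than $\tau$ edges, fewer than $\tau n$ edges are destroyed; as $G_0$ has at least $\tau n$ edges (average degree $\ge 2\tau$ on $n$ vertices), a nonempty subgraph $G_1$ survives, and by construction every surviving vertex has degree at least $\tau$. Being a subgraph of the bipartite $G_0$, $G_1$ is bipartite; relabel its two parts so that $A$ is the larger one. Now delete edges so that each vertex of $A$ retains exactly $\tau$ incident edges, and discard the vertices of the other part that thereby become isolated, obtaining the final bipartite graph $G$ with parts $A$ and $B$. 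No vertex of $A$ is lost, so $|A|$ is unchanged, while $B$ is contained in the (smaller) second part of $G_1$; hence $|B|\le|A|$ and every vertex of $A$ has degree exactly $\tau$, which is (b).

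It remains to verify (c) and to pin down the constants, which is the only delicate point. Since $G\subseteq G_0$ we have $|A|\le n$, and by the minimality in the choice of $n$ we get $\log\log|A|\le\log\log n$, which is at most $(2\tau/\beta)+o(1)$. Consequently $\beta_0\log\log|A|\le\tau$ for any fixed $\beta_0<\beta/2$ (say $\beta_0=\beta/3$), once $\tau$ is large enough that the rounding implicit in the definition of $n$ is negligible; the finitely many remaining small values of $\tau$ are handled separately, each only requiring $n$ to be a fixed large constant. The main obstacle is exactly this constant-matching: both the extraction of a minimum-degree-$\tau$ subgraph and the trimming to degree exactly $\tau$ cost a factor of $2$ in density, which is why $n$ must be chosen with $\beta\log\log n\ge 2\tau$ rather than $\ge\tau$, and tracking these losses is what dictates $\beta_0\approx\beta/2$. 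A final routine point is to confirm that the property of containing no $3$-regular subgraph, and hence of being $\alpha_0$-LS, is genuinely preserved under all the edge and vertex deletions performed above; this holds because the property is monotone under taking subgraphs, which is precisely the heredity invoked for (a).
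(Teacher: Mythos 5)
Your proposal is correct: the paper offers no separate argument for this theorem, presenting it as an immediate consequence of the two quoted results of Pyber, R\"odl and Szemer\'edi, and your derivation --- choosing $n$ minimal with $\beta\log\log n\ge 2\tau$, passing to a subgraph of minimum degree at least $\tau$ by the standard deletion argument, pruning each vertex of the larger side down to degree exactly $\tau$, and invoking heredity of the $\alpha_0$-LS property together with minimality of $n$ to get $\beta_0\approx\beta/2$ --- is precisely the routine verification the paper leaves implicit. The bookkeeping is sound (indeed, since $\tau\ge 2$ forces $n\ge \exp\exp(4/\beta)$, the rounding loss in $\log\log n$ versus $\log\log(n-1)$ is uniformly negligible, so even your separate treatment of small $\tau$ is unnecessary).
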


We will prove that graphs of Theorem \ref{thm:PRS} have small spectral degeneracy.
The proof will use the Chernoff inequality in the following form (cf. \cite{mat}, Theorem 7.2.1)

\begin{lemma}\label{lem:chernoff}
Let $X_1, \ldots, X_n$ be independent random variables, each of them
attaining value $1$ with probability $p$, and having value\/ $0$ otherwise.
Let $X=X_1+\cdots+X_n$.  Then, for any $r>0$,
$$\mbox{{\rm Prob}}\bigl[\,|X-np|\ge r\,\bigr] < \exp\Bigl(-\frac{r^2}{2(np+r/3)}\Bigr).$$
\end{lemma}

We can now prove the following lemma, showing that a bipartite graph
whose bipartite parts are ``almost'' regular cannot be log-sparse.

\begin{lemma}
\label{lem:log-dense subgraph}
Let\/ $T\ge 10$ and\/ $t>0$ be integers such that $$6\alpha_0 \log(20T)\le t\le T.$$
Let\/ $H$ be a bipartite graph of maximum degree $\D\ge 2T$ with bipartition $V(H)=A\cup B$ satisfying the following properties:
\begin{itemize}
\item[\rm (a)] $t\le \deg v\le T$ for each vertex $v\in A$.
\item[\rm (b)] Each vertex $v\in B$ has degree at least $\D/2$.
\end{itemize}
Then $H$ is not $\alpha_0$-LS.
\end{lemma}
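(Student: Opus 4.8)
The plan is to exhibit a subgraph $H''\subseteq H$ whose maximum degree is $O(T)$ but whose average degree exceeds $\alpha_0\log(\Delta(H''))$. Since the construction will guarantee $\Delta(H'')\le 2T$ and the hypothesis gives $t\ge 6\alpha_0\log(20T)\ge 6\alpha_0\log(2T)$, it suffices to produce a subgraph of maximum degree at most $2T$ and average degree at least, say, $2t/3$. The difficulty is that while the vertices of $A$ already have bounded degree (at most $T$), the vertices of $B$ have enormous degree (at least $\Delta/2$, and $\Delta$ may be astronomically larger than $T$), so $B$ is the only obstruction to a small maximum degree.

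I would first randomly sparsify $A$: set $p=T/\Delta\le\tfrac12$, let $A'\subseteq A$ retain each vertex independently with probability $p$, and put $H'=H[A'\cup B]$. Every vertex of $A'$ still has degree in $[t,T]$, while a vertex $v\in B$ now has degree $d'_v$, a sum of $\deg_H(v)\le\Delta$ independent $\mathrm{Bernoulli}(p)$ variables, hence with mean $p\deg_H(v)\le T$. I would then record two ``budget'' inequalities obtained by double counting $e(H)$: from $e(H)\le T|A|$ and $e(H)\ge(\Delta/2)|B|$ one gets $|B|\le 2p|A|$, and $e(H)\ge t|A|$ controls the vertex count. Together these give $E[e(H')]=p\,e(H)\ge pt|A|$ against $E[|A'|]+|B|\le 3p|A|$, so the expected average degree is already $\Omega(t)$.

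The crux is to cut the maximum degree down to $O(T)$ without destroying this average degree. I would call $v\in B$ \emph{bad} if $d'_v\ge 2T$ and delete all bad vertices, obtaining $H''=H[A'\cup B_{\mathrm{good}}]$, which automatically satisfies $\Delta(H'')\le 2T$. The main obstacle — and the only place where the hypotheses $T\ge10$ and $\Delta\ge2T$ really matter — is to show that deleting bad vertices costs only a small fraction of the edges, uniformly in the possibly huge value of $\Delta$. The key observation is that the \emph{tail expectation} of a binomial with mean at most $T$ is small independently of the number of trials: applying Lemma \ref{lem:chernoff} with deviation $r=d'_v-p\deg_H(v)\ge T$ gives $\mathrm{Prob}[d'_v\ge 2T]<\exp(-3T/8)$, and summing the analogous tail bound over $k\ge 2T$ yields $E\!\left[d'_v\,\mathbf 1[d'_v\ge 2T]\right]\le c_1 T\exp(-3T/8)$ for an absolute constant $c_1$, with no dependence on $\deg_H(v)$ or on $\Delta$. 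Summing over $v\in B$ and comparing with $E[e(H')]\ge T|B|/2$, the expected number of deleted edges is at most a fraction $2c_1\exp(-3T/8)$ of $E[e(H')]$, which is well below $1/4$ once $T\ge10$.

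Finally I would combine these estimates by the first-moment method applied to the single quantity $\Phi=e(H'')-\tfrac{\alpha_0\log(2T)}{2}\,|V(H'')|$, whose positivity is exactly equivalent to $H''$ having average degree exceeding $\alpha_0\log(2T)\ge\alpha_0\log(\Delta(H''))$. Using the lower bound on $E[e(H'')]$, the upper bound $E[|V(H'')|]\le 3p|A|\le 3T\,e(H)/(\Delta t)$, and $t\ge 6\alpha_0\log(2T)$ to bound $\tfrac{3\alpha_0\log(2T)}{2t}\le\tfrac14$, one finds $E[\Phi]\ge\tfrac{T\,e(H)}{\Delta}\bigl(\tfrac34-2c_1\exp(-3T/8)\bigr)>0$. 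Hence some choice of $A'$ yields $\Phi>0$, producing a subgraph witnessing that $H$ is not $\alpha_0$-LS. I expect the tail-expectation bound to be the only genuinely delicate step; everything else reduces to linearity of expectation together with the two double-counting budget inequalities.
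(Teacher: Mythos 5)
Your proposal is correct and follows essentially the same route as the paper's proof: randomly sparsify $A$ with probability $p=\Theta(T/\Delta)$, use the Chernoff bound to control the degrees of $B$-vertices in the sparsified graph, delete the few high-degree $B$-vertices, and observe that the remaining graph has maximum degree $O(T)$ but average degree $\Omega(t)$, contradicting $\alpha_0$-log-sparseness. The only (harmless, and mildly streamlining) difference is bookkeeping: you run a single first-moment argument on $\Phi=e(H'')-\tfrac{\alpha_0\log(2T)}{2}|V(H'')|$, whereas the paper separately applies a concentration bound to $|A'|$ and Markov's inequality to the edges at bad vertices, then intersects the two events.
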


\begin{proof}
Choose a subset $A'$ of $A$ by selecting each element uniformly independently with probability $p=2T/\D$, and let
$H'$ be the subgraph of $H$ induced by $A'\cup B$.  The expected size of $A'$ is $a'=2T|A|/\D$.
Note that $T|A|\ge |E(H)|\ge |B|\D/2$, thus $a'\ge |B|$.  Furthermore, $|A|\ge \D/2$, and thus
$a'\ge T\ge 10$.  By Lemma~\ref{lem:chernoff},
$$\mbox{Prob}\bigl[\,|A'|\le \tfrac{1}{2}a'\,\bigr] < e^{-3a'/28}< \tfrac{1}{2}.$$
Therefore, we have $2|A'|\ge a'\ge |B|$ with probability greater than $\tfrac{1}{2}$.

Consider a vertex $v\in B$.  The expected degree of $v$ in $H'$ is between $T$ and $2T$, and
the probability that $v$ has degree greater than $2cT$ is less than $e^{-cT}$ for any $c\ge 10$,
by Lemma~\ref{lem:chernoff}.  Let $z=0$ if $\deg_{H'} v\le 20T$ and $z=\deg_{H'} v$ otherwise.
The expected value of $z$ is
\begin{eqnarray*}
\sum_{j=20T+1}^\infty Pr(\deg_{H'} v=j)j&=&\sum_{j=20T+1}^\infty\sum_{i=1}^j Pr(\deg_{H'} v=j)\\
&=&\sum_{i=1}^{20T}\sum_{j=20T+1}^\infty Pr(\deg_{H'} v=j)+\sum_{i=20T+1}^\infty\sum_{j=i}^\infty Pr(\deg_{H'} v=j)\\
&=&20T Pr(\deg_{H'} v>20T) + \sum_{i=20T+1}^\infty Pr(\deg_{H'} v\ge i)\\
&=&20T Pr(\deg_{H'} v>20T) + \sum_{i=20T}^\infty Pr(\deg_{H'} v>i)\\
&\le&20Te^{-10T}+\sum_{i=20T}^\infty e^{-i/2}.
\end{eqnarray*}
We conclude that the expected number of edges of $H'$ incident with vertices of degree greater
than $20T$ is less than
$$|B|\Bigl(20Te^{-10T}+\sum_{i=20T}^\infty e^{-i/2}\Bigr)<|B|(20T+3)e^{-10T}.$$
By Markov's inequality, it happens with positive probability that $H'$ has less than $2|B|(20T+3)e^{-10T}$ edges incident with
vertices of degree greater than $20T$ and that $2|A'|\ge |B|$.

Let us now fix a subgraph $H'$ with these properties.
Let $H''$ be the graph obtained from $H'$ by removing the vertices of degree greater than $20T$.
Clearly, $\D(H'')\le 20T$.  Also, $H''$ has at most $3|A'|$ vertices and more than
$$|A'|t - 2|B|(20T+3)e^{-10T}\ge |A'|(t-4(20T+3)e^{-10T})\ge \tfrac{1}{2}|A'|t$$
edges, thus the average degree of $H''$ is greater than $t/6$.  Since $t/6\ge \alpha_0 \log(20T)$,
this shows that $H$ is not $\alpha_0$-LS.
\end{proof}

\begin{theorem}
\label{thm:main example}
Suppose that a bipartite graph $G$ with bipartition $V(G)=A\cup B$ satisfies properties (a)--(c)
of Theorem \ref{thm:PRS}, where $\tau\ge 10$ and $6\alpha_0\log(20\tau)\le \tau$. Then $G$ is spectrally $d$-degenerate,
where $$d = 48(3+2\sqrt{2})\alpha_0 \, \log(20\tau).$$
\end{theorem}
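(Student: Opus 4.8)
The plan is to bound $\rho(H)$ for an arbitrary subgraph $H\subseteq G$ by $\sqrt{d\,\Delta(H)}$, where I write $\Delta:=\Delta(H)$. Since $H$ is bipartite with its parts contained in $A$ and $B$, property (b) of Theorem~\ref{thm:PRS} guarantees that every $A$-vertex of $H$ has degree at most $\tau$, while the large degrees all sit on the $B$-side. First I would split the edges of $H$ according to the degree of their $B$-endpoint: for $j=0,1,\dots,J$ with $2^J\le\Delta<2^{J+1}$, let $H_j$ consist of the edges incident with $B$-vertices whose degree in $H$ lies in $[2^j,2^{j+1})$. Then every $B$-vertex of $H_j$ has degree in $[2^j,2^{j+1})$, so $\Delta(H_j)<2^{j+1}$ and every $B$-vertex of $H_j$ has degree at least $2^j>\Delta(H_j)/2$; this is exactly hypothesis (b) of Lemma~\ref{lem:log-dense subgraph}. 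By subadditivity (Lemma~\ref{lem:1}(b)), $\rho(H)\le\sum_{j=0}^J\rho(H_j)$.

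The heart of the argument is the per-class estimate
\begin{equation}
\rho(H_j)\le 2\sqrt{t\cdot 2^j},\qquad t:=6\alpha_0\log(20\tau).
\label{eq:perclass}
\end{equation}
Granting (\ref{eq:perclass}), the theorem follows by a geometric summation. Using $\sum_{j=0}^J 2^{j/2}\le 2^{(J+1)/2}/(\sqrt2-1)\le(2+\sqrt2)\sqrt\Delta$ (the last step since $2^J\le\Delta$), one gets $\rho(H)\le 2\sqrt t\,(2+\sqrt2)\sqrt\Delta$, and hence $\rho(H)^2\le 4t(2+\sqrt2)^2\Delta=8(3+2\sqrt2)\,t\,\Delta=48(3+2\sqrt2)\alpha_0\log(20\tau)\cdot\Delta=d\Delta$, which is exactly what is claimed.

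To prove (\ref{eq:perclass}) I would argue by contradiction: assume $\rho(H_j)>2\sqrt{t\cdot2^j}$. The crucial step is to extract from $H_j$ a subgraph $F$ in which every $A$-vertex has degree at least $t$ while every $B$-vertex still has degree at least $2^j$ (that is, the ``$(t,2^j)$-core'' of $H_j$ is nonempty). Such an $F$ forces a contradiction with the $\alpha_0$-LS property of $H_j\subseteq G$ (which holds since $\alpha_0$-LS is hereditary), in one of three regimes. If $2^j\le t$, then (\ref{eq:perclass}) is already trivial, because $\rho(H_j)\le\Delta(H_j)<2^{j+1}\le 2\sqrt{t\cdot2^j}$ by Lemma~\ref{lem:2}. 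If $t<2^j<2\tau$, then $F$ has minimum degree at least $t$ and maximum degree below $4\tau$, so its average degree exceeds $t=6\alpha_0\log(20\tau)>\alpha_0\log\Delta(F)$, directly contradicting that $F$ is $\alpha_0$-LS. Finally, if $2^j\ge 2\tau$, then $F$ meets all hypotheses of Lemma~\ref{lem:log-dense subgraph} with $T=\tau$: the $A$-degrees lie in $[t,\tau]$, the $B$-degrees are at least $2^j>\Delta(F)/2$, one has $\Delta(F)\ge 2^j\ge 2T$, and $6\alpha_0\log(20T)=t\le T$ by the hypothesis of the theorem; thus $F$ is not $\alpha_0$-LS, again a contradiction.

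I expect the extraction step to be the main obstacle: producing, from the single assumption $\rho(H_j)>2\sqrt{t\cdot2^j}$, a subgraph whose $A$-side minimum degree is raised to $t$ \emph{without} destroying the high $B$-degrees that make hypothesis (b) of Lemma~\ref{lem:log-dense subgraph} available. The standard ``large spectral radius yields a dense subgraph'' peelings are symmetric and would pull the $B$-degrees below $\Delta(F)/2$, recovering only a bound depending on $\log\Delta$ instead of $\log\tau$. What is available in the right direction is the inequality $\sum_{u\in A}x_u^2\deg(u)\ge \rho(H_j)^2/(4\cdot 2^j)>t$, obtained from the eigenvalue equation on the $B$-side, Cauchy--Schwarz, and the identity $\sum_{u\in A}x_u^2=\sum_{w\in B}x_w^2=\tfrac12$ for the Perron vector $x$ of a (connected component of) $H_j$; equivalently, the weighted average $A$-degree exceeds $2t$. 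The delicate point is to turn this weighted statement into an actual subgraph that simultaneously retains all $B$-degrees of size at least $2^j$, and it is precisely here that the asymmetry between the two sides — bounded $A$-degrees versus almost-regular large $B$-degrees within $H_j$ — has to be exploited. The summation and the three-regime case analysis above are routine by comparison.
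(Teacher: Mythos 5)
There is a genuine gap, and it is exactly the one you flag yourself: the extraction step is not just ``the main obstacle'' but the entire content of the theorem, and within your single-scale framework it cannot be carried out. Peeling $A$-vertices of degree less than $t$ from $H_j$ destroys the lower bound $2^j$ on the $B$-degrees, so the residue no longer satisfies hypothesis (b) of Lemma~\ref{lem:log-dense subgraph}; and if instead you try to account for the peeled edges spectrally, the peeled part admits an orientation with indegrees below $t$, so Theorem~\ref{thm:Hayes} already costs about $2\sqrt{t\cdot 2^{j+1}}=2\sqrt{2}\sqrt{t\cdot 2^j}$, exceeding your entire per-class budget $2\sqrt{t\cdot 2^j}$ before the residue is even touched, while the edges discarded at $B$-vertices whose degree drops below $2^j$ live at a smaller scale where your dyadic classes give you nothing to recurse on (the classes do not nest). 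Moreover your per-class estimate is false as stated at small scales: the claim $\Delta(H_j)<2^{j+1}$ fails when $2^{j+1}<\tau$, since $A$-vertices of $H_j$ can retain degree up to $\tau$. Concretely, if $G$ contains an $A$-vertex all of whose $\tau$ neighbours have degree $1$ (nothing in Theorem~\ref{thm:PRS} forbids this), then the class $j=0$ contains $K_{1,\tau}$, whose spectral radius $\sqrt{\tau}$ exceeds $2\sqrt{t\cdot 2^0}$ as soon as $\tau>4t$; this is repairable by merging all scales below $\tau$ and invoking $\alpha_0$-LS degeneracy there, but the large-scale extraction remains unproved.

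The paper avoids both problems with two ideas absent from your plan. First, it takes a counterexample $H$ with $D=\Delta(H)$ \emph{minimum} among all violators; the resulting induction on the maximum degree is what replaces your impossible within-class recursion. Second, it never asks a core to carry the spectral mass. It partitions $E(H)=E(H_0)\cup E(H_1)\cup E(H_2)$ by a minimal-induced-subgraph argument: edges at $A$-vertices of degree at most $\gamma d$ (with $\gamma=(3-2\sqrt 2)/8$, so that $\gamma d=6\alpha_0\log(20\tau)$) go into a $\gamma d$-degenerate graph $H_1$, edges at $B$-vertices of degree at most $D/2$ go into $H_2$ with $\Delta(H_2)\le D/2$, and the residue $H_0$, if nonempty, has $A$-degrees in $[\gamma d,\tau]$ and $B$-degrees at least $D/2$, hence is not $\alpha_0$-LS by Lemma~\ref{lem:log-dense subgraph} with $t=\gamma d$ and $T=\tau$ (here the preliminary degeneracy estimate (\ref{eq:P2})--(\ref{eq:P3}), which dispatches all violators of small maximum degree, guarantees $D\ge 2\tau$). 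So $H_0$ is forced to be \emph{empty}, and then by Lemma~\ref{lem:1}(b), Theorem~\ref{thm:Hayes} on $H_1$, and the minimality of $D$ applied to $H_2$,
$$\rho(H)\le \rho(H_1)+\rho(H_2)\le \sqrt{4\gamma dD}+\sqrt{dD/2}=\Bigl(\sqrt{4\gamma}+\sqrt{1/2}\Bigr)\sqrt{dD}=\sqrt{dD},$$
since $\sqrt{4\gamma}=1-1/\sqrt 2$. Your Perron-vector computation (weighted average $A$-degree exceeding $2t$) is correct but, as you suspected, cannot be localized to a subgraph satisfying the lemma's hypotheses; the missing ingredients are precisely the minimal-$D$ induction and the charging of peeled edges to Hayes' bound rather than to the core.
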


\begin{proof}
Suppose for a contradiction that $H$ is a subgraph of $G$ with maximum degree $D=\D(H)$ whose spectral radius violates spectral $d$-degeneracy requirement,
\begin{equation}
    \rho(H) > \sqrt{dD}.
\label{eq:P1}
\end{equation}
We may assume that $H$ is chosen so that $D$ is minimum possible.
Since $G$ is $\alpha_0$-LS, the same holds for its subgraph $H$. In particular, $H$ is $\alpha_0\log(D)$-degenerate and hence $\rho(H) \le 2\sqrt{\alpha_0\log(D)\cdot D}$. By (\ref{eq:P1}) we conclude that
\begin{equation}
    4\alpha_0\log(D) > d.
\label{eq:P2}
\end{equation}
This implies, in particular, that
\begin{equation}
    D \ge 2\tau.
\label{eq:P3}
\end{equation}
Let $\gamma = (3-2\sqrt{2})/8$. Let us partition the edges of $H$ into three subgraphs, $H=H_0\cup H_1\cup H_2$, such that the following holds:
\begin{itemize}
\item[\rm (a)] Each vertex in $V(H_0)\cap B$ has degree in $H_0$ at least $D/2$.
\item[\rm (b)] Each vertex in $V(H_0)\cap A$ has degree in $H_0$ at least $\gamma d$.
\item[\rm (c)] $H_1$ is $\gamma d$-degenerate.
\item[\rm (d)] $\D(H_2) \le D/2$.
\end{itemize}
Such a partition can be obtained as follows.  Let $H_0$ be a minimal induced subgraph of $H$
such that $E(H)\setminus E(H_0)$ can be partitioned into graphs $H_1$ and $H_2$ satisfying the
conditions (c) and (d) and $V(H_0)\cap V(H_1)\cap A=\emptyset$.  We claim that $H_0$ satisfies (a) and (b).  Indeed, suppose that
$H_0$ violates (a).  Then, there exists a vertex $v\in V(H_0)\cap B$ of degree at most $D/2$.
Consider the graph $H'_2$ obtained from $H_2$ by adding all edges of $H_0$ incident with $v$.
Clearly, $\D(H'_2)\le D/2$, since $v$ has degree at most $D/2$ and all vertices in $A\cap V(H'_2)$
have degree at most $\tau\le D/2$ by (\ref{eq:P3}).  Thus, there exists a partition of $E(H)\setminus E(H_0-v)$
satisfying (c) and (d), which contradicts the minimality of $H_0$.  Similarly, suppose that
$H_0$ violates (b), so there exists $v\in V(H_0)\cap A$ of degree at most $\gamma d$.
Since $V(H_0)\cap V(H_1)\cap A=\emptyset$, $v\not\in V(H_1)$, and thus the graph $H'_1$ obtained
from $H_1$ by adding all edges of $H_0$ incident with $v$ is $\gamma d$-degenerate.
Furthermore, $V(H_0-v)\cap V(H'_1)\cap A=\emptyset$, so we again obtain a contradiction with
the minimality of $H_0$.

Suppose that $H_0\ne\emptyset$. Then we use properties (a)--(b) of $H_0$ and
apply Lemma \ref{lem:log-dense subgraph} to conclude that $H_0$ is not
$\alpha_0$-LS. This contradicts our assumption that $G$ is $\alpha_0$-LS and
shows that $H_0$ must be empty.

Thus, $H = H_1\cup H_2$. Since $H$ was selected as a subgraph violating spectral degeneracy with its maximum degree smallest possible,
we conclude that $H_2$ is spectrally $d$-degenerate. By applying Lemma \ref{lem:1}(b) and using Theorem \ref{thm:Hayes} on $H_1$, we obtain
\begin{eqnarray*}
  \rho(H) &\le& \rho(H_1)+\rho(H_2)\\
  &\le& \sqrt{4\gamma d \D(H_1)} + \sqrt{d \D(H_2)} \\
  &\le& \sqrt{4\gamma d D} + \sqrt{d D/2} \\
  &\le& \bigl(\sqrt{4\gamma} + \sqrt{1/2}\,\bigr)\sqrt{d D} = \sqrt{d D}.
\end{eqnarray*}
This contradicts (\ref{eq:P1}) and completes our proof.
\end{proof}

By Theorem~\ref{thm:PRS}, there exist constants $\beta$ and $n_0$ such that we can
apply Theorem~\ref{thm:main example} to graphs on $n$ vertices with $\tau\ge \beta\log\log n$,
for any $n\ge n_0$.  Then, $d=O(\log\log\log n)$, and thus the ratio between the degeneracy
and the spectral degeneracy is at least $\Omega(\log\log n/\log\log\log n)\ge \Omega(\log\log \D/\log\log\log \D)$.

Let us however remark that this does not exclude the possibility that the degeneracy is bounded
by a function of the spectral degeneracy.  Answering a question we posed in the preprint version
of this paper, Alon~\cite{alon} proved that that is not the case.

\begin{theorem}
For every $M$, there exist spectrally $50$-degenerate graphs with minimum degree at least $M$.
\end{theorem}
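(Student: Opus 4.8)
The plan is to produce such graphs by the probabilistic method, constructing for each $M$ a graph $G=G_M$ with $\delta(G)\ge M$ whose degrees are spread over geometrically growing scales, and then to bound $\rho(H)$ for \emph{every} subgraph $H$ by the same decomposition philosophy used in Theorem~\ref{thm:main example}. The guiding principle is self-similarity: since spectral $50$-degeneracy demands $\rho(H)\le\sqrt{50\,\D(H)}$ with $\D(H)$ the maximum degree \emph{of the subgraph itself}, the graph must look spectrally $50$-degenerate at every scale simultaneously, not merely at the top scale $\DG$. This dictates the necessary features. Taking $H$ to be a near-regular subgraph and applying Lemma~\ref{lem:4} to a balanced bipartition shows that $G$ may contain no subgraph that is ``balanced-dense'': a $k$-regular subgraph $H$ has $\rho(H)=k=\D(H)$, which already violates the bound once $k>50$, and more generally any biregular bipartite subgraph with cross-degrees $a,b$ forces $\min(a,b)\le 50$. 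Since $\delta(G)\ge M$ forces average degree $\ge M$, the Pyber--R\"odl--Szemer\'edi phenomenon (Theorem~\ref{thm:PRS}) tells us that avoiding such balanced-dense subgraphs requires $\DG$ to be enormous compared to $M$ (at least doubly exponential); so I would fix $\DG$ huge and build the degree sequence on scales $M,2M,4M,\dots,\DG$.

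For the construction I would take a random graph on levels $L_0,\dots,L_k$ with $k\approx\log_2(\DG/M)$, intended degree $\approx 2^iM$ on $L_i$, and edges placed (at random, with prescribed marginals) only between nearby levels. Two properties must be engineered into the randomness. First, \emph{bounded codegree}: any two vertices have $O(1)$ common neighbours; this is what prevents the union of the neighbourhoods of a few high-degree vertices from forming a dense core, and is the mechanism that will make the ``core'' $H_0$ of the decomposition empty at every scale. Second, and crucially, the bipartite graph between consecutive scales must be \emph{irregular with anti-correlated degrees}, so that its spectral radius sits a constant factor below the biregular value $\sqrt{ab}$: as the propagation argument shows, a biregular consecutive-level structure is impossible (it would force a cross-degree $\le 50$ at some level and hence, by induction up or down the levels, contradict $\delta(G)\ge M$). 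The level structure and the choice of marginals guarantee $\delta(G)\ge M$, which is a direct counting check.

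To verify spectral $50$-degeneracy I would run the argument of Theorem~\ref{thm:main example} scale by scale. Given any subgraph $H$ with $D=\D(H)$, I would edge-partition $H=H_0\cup H_1\cup H_2$ so that $H_1$ is $\gamma\cdot 50$-degenerate, $\D(H_2)\le D/2$, and the residual core $H_0$ has its high side of degree $\ge D/2$ and its low side of degree $\ge\gamma\cdot 50$, with $\gamma=(3-2\sqrt2)/8$. The bounded-codegree and anti-correlation properties are used exactly as Lemma~\ref{lem:log-dense subgraph} is used there, to show $H_0=\emptyset$; the key difference is that here the emptiness must hold for \emph{every} threshold $D$, i.e.\ at every scale, which is why the construction must be self-similar rather than $\alpha_0$-LS (the latter only yields density $O(\log\D)$ and hence the growing bound of Theorem~\ref{thm:main example}). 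Once $H=H_1\cup H_2$, subadditivity (Lemma~\ref{lem:1}(b)) together with Theorem~\ref{thm:Hayes} on $H_1$ and induction on $\D$ for the spectrally $50$-degenerate $H_2$ gives
$$\rho(H)\le\rho(H_1)+\rho(H_2)\le\sqrt{4\gamma\cdot 50\,D}+\sqrt{50\cdot D/2}=\bigl(\sqrt{4\gamma}+\sqrt{1/2}\,\bigr)\sqrt{50 D}=\sqrt{50 D},$$
where the last equality is the same numerical identity $\sqrt{4\gamma}+\sqrt{1/2}=1$ verified in Theorem~\ref{thm:main example}.

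The main obstacle is the second property of the construction: arranging the random bipartite graph between consecutive scales so that its spectral radius, and that of every one of its subgraphs, stays a fixed constant factor below the biregular value, \emph{while} every vertex keeps degree $\ge M$. Equivalently, I must guarantee core-emptiness uniformly across all $k\approx\log_2(\DG/M)$ scales, which is a quantitative, PRS-type first-moment estimate controlling dense bipartite configurations at every degree threshold at once. Keeping the absolute constant at $50$ (rather than letting it grow with $k$ or with $M$) is the delicate point, and I expect essentially all the difficulty of the theorem to be concentrated in proving this uniform no-dense-core lemma for the randomized multi-scale construction.
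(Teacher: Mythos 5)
First, a point of reference: the paper does not prove this theorem at all --- it is Alon's result, quoted with a citation to~\cite{alon} as the answer to a question the authors posed. So your argument has to stand entirely on its own, and it does not: you yourself concede that ``essentially all the difficulty of the theorem'' is concentrated in the uniform no-dense-core lemma for your randomized multi-scale construction, and that lemma is nowhere proved. Nor is the construction ever actually specified --- no level sizes, no edge marginals, no verification that minimum degree at least $M$, bounded codegree, and the ``anti-correlation'' property can coexist. Everything you do write out in detail (the edge-partition $H=H_0\cup H_1\cup H_2$, Theorem~\ref{thm:Hayes} applied to $H_1$, the induction on $\D(H)$ for $H_2$, the identity $\sqrt{4\gamma}+\sqrt{1/2}=1$) is a verbatim replay of the proof of Theorem~\ref{thm:main example}, which is precisely the part requiring no new idea; the step that made that proof close, namely Lemma~\ref{lem:log-dense subgraph} forcing $H_0=\emptyset$ via the $\alpha_0$-LS property, has no counterpart in your setting, and supplying one \emph{is} the theorem. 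As it stands, this is a research plan, not a proof.

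Beyond the admitted hole, the blueprint has concrete defects. Bounded codegree is not a mechanism that empties the core: point-line incidence graphs of projective planes have all codegrees at most $1$, yet are $(q+1)$-biregular with $\rho=\sqrt{q(q+1)}>\sqrt{50(q+1)}$ for $q\ge 50$, so they are exactly the forbidden cores your mechanism was supposed to exclude. And geometric scales with edges only between nearby levels are quantitatively untenable: if degrees on $L_i$ are $\Theta(2^iM)$ and each vertex meets only $O(1)$ levels, then in any bi-level piece all degrees agree up to a constant factor, and Lemma~\ref{lem:4} applied to the two-class partition (average cross-degrees $a,b$ must satisfy $ab\le 50\,\D(H)$) forces the receiving side of every piece in which the sending side invests a constant fraction of its degree to have average degree $O(1)$; summing over the $O(1)$ admissible levels, each vertex can (essentially) only \emph{receive} $O(1)$ edges, so its degree must be sent toward higher scales --- and the top scale then starves, contradicting $\delta\ge M$. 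This analysis shows that each vertex's $\ge M$ edges must instead be spread, a bounded number per scale, over $\Omega(M)$ degree scales separated by much more than constant factors; it is tower-type separation that makes $\sum_t\sqrt{\D_t}$ telescope to $O(\sqrt{\D(H)})$ and keeps the constant an absolute $50$ rather than growing with $M$. That is, in essence, how Alon's cited proof proceeds --- an explicit deterministic construction with enormously separated scales and a direct subadditivity estimate via Lemma~\ref{lem:1}(b), with no core decomposition and no randomness --- and your proposal would have to be rebuilt around such a construction rather than around the deferred ``uniform'' lemma.
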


\section{Computational complexity remarks}

Our results raise the problem of how hard it is to verify spectral degeneracy
of a graph.

\begin{quote}
	{\sc Spectral Degeneracy Problem} \\
	{\sc Input}: A graph $G$ and a positive rational number $d$. \\
	{\sc Task}: Decide if $G$ is spectrally $d$-degenerate.
\end{quote}

\noindent
Below we prove that this problem is co-NP-complete.  
To demonstrate this, we need some preliminary
results.  First, we show that distinct roots of a polynomial
cannot be too close to each other. For a polynomial $p(x)=\sum_{i=0}^k a_ix^i$ with integer coefficients,
let $a(p)=\log \max_{0\le i\le k} |a_i|$.

\begin{lemma}
  \label{lemma-distroots}
  Let $p(x)$ be an integer polynomial of degree $k$.
  If $p(u)=p(v)=0$ and $u\neq v$, then $-\log |u-v|=O(k^3(a(p)+\log k))$.
\end{lemma}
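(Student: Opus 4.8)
The goal is to lower bound the separation between two distinct roots $u,v$ of an integer polynomial $p$ of degree $k$, and the plan is to reduce this to a lower bound on a single nonzero algebraic quantity, namely the resultant-type expression $p'(u)$ or a discriminant. The cleanest route is through the product $\prod (u_i - u_j)$ over pairs of roots, which is essentially the discriminant of $p$ (up to the leading coefficient): the discriminant is an integer (a polynomial in the coefficients of $p$ with integer coefficients), and if $p$ has distinct roots it is a nonzero integer, hence has modulus at least $1$. From a lower bound of $1$ on the modulus of a product of root differences, together with upper bounds on all the other factors, one extracts a lower bound on the single factor $|u-v|$.

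First I would reduce to the case where $p$ is squarefree (and its distinct roots are what we care about), since only distinct roots are relevant; factoring out repeated factors does not increase the relevant parameters by more than polynomial-in-$k$ amounts, so the bound is unaffected in its order of magnitude. Next I would invoke the standard bound on the modulus of the roots of an integer polynomial: every root $\alpha$ satisfies $|\alpha| \le 1 + \max_i |a_i|/|a_k|$, which gives $\log|\alpha| = O(a(p))$. Then I would write the discriminant $\mathrm{Disc}(p) = a_k^{2k-2}\prod_{i<j}(u_i-u_j)^2$, observe it is a nonzero integer so $|\mathrm{Disc}(p)| \ge 1$, and also upper bound $|\mathrm{Disc}(p)|$ in terms of $a(p)$ and $k$ (the discriminant is an integer polynomial of degree $2k-2$ in the coefficients, so $\log|\mathrm{Disc}(p)| = O(k(a(p)+\log k))$ using Hadamard's inequality on the Sylvester matrix of $p$ and $p'$). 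Finally, from $1 \le |\mathrm{Disc}(p)| = |a_k|^{2k-2}\prod_{i<j}|u_i-u_j|^2$, I would isolate the factor $|u-v|^2$ and bound each of the remaining $\binom{k}{2}-1$ factors $|u_i-u_j|$ from above by $2\max_i|u_i| = \exp(O(a(p)))$ and bound $|a_k|^{2k-2}$ from above by $\exp(O(k\,a(p)))$. Taking logarithms yields $-\log|u-v| = O(k^2 a(p) + k^2\log k + k(a(p)+\log k)) = O(k^3(a(p)+\log k))$ after accounting crudely for all factors, matching the claimed bound (the stated $k^3$ absorbs the $O(k^2)$ pair-count times the $O(k)$-or-$O(a(p))$ size of each factor's logarithm).

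The main obstacle is bookkeeping: one must be careful that the discriminant is genuinely a nonzero integer, which requires $p$ to be squarefree (so that no two roots coincide and $\mathrm{Disc}(p)\neq 0$) — hence the reduction to the squarefree part, and a verification that $a(p)$ and $k$ for the squarefree part are controlled. The only quantitatively delicate step is the upper bound on $|\mathrm{Disc}(p)|$; I would get this via Hadamard's inequality applied to the $(2k-1)\times(2k-1)$ Sylvester matrix whose entries are coefficients of $p$ and $p'$, each of modulus at most $\exp(a(p))\cdot O(k)$, giving $\log|\mathrm{Disc}(p)| = O(k(a(p)+\log k))$. Everything else is an elementary combination of the root-modulus bound with the arithmetic identity for the discriminant, so no deep machinery beyond Hadamard's inequality and the integrality of the discriminant is needed.
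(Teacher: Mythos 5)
Your proposal is correct and lands the same bound, but it takes a partly different route from the paper, which is worth spelling out. The paper's proof has the same skeleton as yours --- reduce to a squarefree integer polynomial $r$ with $r(u)=r(v)=0$, then use $|\mathrm{Disc}(r)|\ge 1$ --- but it outsources both halves to the literature: the coefficient control under the squarefree reduction comes from Brown's bound $a(q)=O(k(a(p)+\log k))$ on an integer gcd $q$ of $p$ and $p'$ (the paper sets $r(x)=c^k p(x)/q(x)$ with $c$ the leading coefficient of $q$, guaranteeing integrality, so $a(r)=O(k^2(a(p)+\log k))$), and the root-separation step is Mahler's theorem $-\log|y-z|=O(-\log|D|+d\log d+d\,a(s))$. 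You instead prove the separation step inline: from $1\le|\mathrm{Disc}(r)|=|a_k|^{2k-2}\prod_{i<j}|u_i-u_j|^2$, isolate $|u-v|^2$ and bound the other $O(k^2)$ factors via the Cauchy root bound $|\alpha|\le 1+e^{a}$, getting $-\log|u-v|=O(k^2(a(r)+1))$ for the squarefree polynomial. This is weaker than Mahler's bound by roughly a factor of $k$ (pair-counting versus Mahler's sharper analysis), but after the squarefree reduction it still comfortably fits inside the claimed $O(k^3(a(p)+\log k))$, so your elementary argument genuinely replaces the citation. Two small points. First, your Hadamard upper bound on $|\mathrm{Disc}(p)|$ is never used: your final chain only needs the lower bound $|\mathrm{Disc}|\ge 1$ together with upper bounds on $|a_k|$ and on the remaining root differences, so that step can be deleted. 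Second, the one place where you assert rather than prove is the coefficient control in the squarefree reduction: ``does not increase the relevant parameters by more than polynomial-in-$k$ amounts'' is misleading as stated, since the coefficients of a factor of $p$ can grow exponentially in $k$; what is true (Mignotte's factor bound, or the paper's appeal to Brown) is that $a(\cdot)$ grows additively by $O(k)$, i.e.\ $a(r)\le a(p)+O(k)$, and you should also note that the squarefree part can be taken in $\mathbb{Z}[x]$ (Gauss's lemma, or the paper's $c^k p/q$ trick) so that the integrality of the discriminant applies. With those two repairs made explicit, your proof is complete and somewhat more self-contained than the paper's, at the cost of a cruder intermediate constant.
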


\begin{proof}
Mahler~\cite{mahler} proved that if $y$ and $z$ are two roots
of a polynomial $s(x)$ of degree $d$, then $-\log |y-z|=O(-\log |D| + d\log d + da(s))$,
where $D$ is the discriminant of $s$.
To apply this result, we need to eliminate the roots of $p$ with multiplicity greater than one.
By Brown~\cite{brown}, there exists an integer polynomial
$q(x)$ that is a greatest common divisor of $p(x)$ and $p'(x)$ such that
$a(q)=O(k(a(p)+\log k))$.
Let $c$ be the leading coefficient of $q$ and let $r(x)=c^kp(x)/q(x)$.  
Note that $r(x)$ is an integer polynomial, all of whose roots are simple, 
$r(u)=r(v)=0$, and $a(r)=O(k^2(a(p)+\log k))$.
Since $r$ is an integer polynomial with simple roots, the absolute value of its 
discriminant is at least $1$. Using the afore-mentioned result of 
Mahler~\cite{mahler}, we conclude that $-\log |u-v|=O(k^3(a(p)+\log k))$.
\end{proof}

Cheah and Corneil~\cite{regsg} showed the following.
\begin{theorem}\label{thm-regsg}
For any fixed integer $d\ge 3$, determining whether a graph of maximum degree $d+1$
has a $d$-regular subgraph is NP-complete.
\end{theorem}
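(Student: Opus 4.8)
The plan is to prove both halves of NP-completeness, with the substance lying entirely in the hardness reduction. Membership in NP is routine: a candidate $d$-regular subgraph is specified by its edge set $F\subseteq E(G)$, and given $F$ one verifies in polynomial time that every vertex incident with an edge of $F$ has exactly $d$ such edges; so the problem is in NP. For hardness I would reduce from a standard NP-complete problem (a bounded-occurrence variant of 3-SAT), and the difficulty that must be confronted throughout is the tightness of the degree bound: the input graph is allowed maximum degree only $d+1$, so every vertex has at most one unit of slack above the target degree $d$, leaving essentially no room in the gadgets. It is worth noting at the outset why the problem is hard at all — a spanning $d$-\emph{factor} is findable in polynomial time via matching/flow, so all of the hardness must be extracted from the freedom to \emph{omit} vertices when selecting a $d$-regular subgraph.

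Accordingly I would design a \emph{variable gadget} with two stable locally $d$-regular configurations, ``true'' and ``false,'' each saturating all of its vertices to degree $d$ \emph{except} that it leaves a prescribed family of designated half-edges unsaturated (one per occurrence of the corresponding literal), these half-edges pointing toward the clauses. I would then connect each variable gadget to each clause containing that variable through degree-controlled ``wires'' — paths whose internal vertices have degree $2$, so that inclusion in the subgraph is all-or-nothing — arranged so that a wire delivers its terminal edge to a clause exactly when the literal is set true.

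The matching \emph{clause gadget} for $C=\ell_1\vee\ell_2\vee\ell_3$ would be a small fixed graph attached to the three incoming literal-edges, engineered so that it admits a $d$-regular completion if and only if at least one of the three edges is supplied; when all three literals are false, some gadget vertex is forced to deficient degree and no completion exists. Global correctness then follows in both directions: a satisfying assignment produces a consistent selection of edges making the whole chosen subgraph $d$-regular, and conversely any $d$-regular subgraph reads off a satisfying assignment from the forced states of the variable gadgets. Every gadget vertex is kept at degree at most $d+1$ throughout.

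To handle every fixed $d\ge 3$ uniformly rather than only the cubic case, I would pad each logically active vertex with $d-3$ edges forced internally — for instance through a rigid block such as $K_{d+1}$ minus a perfect matching, which has an essentially unique way of being completed to a $d$-regular graph — so that exactly three external connections continue to carry the Boolean logic. The main obstacle is precisely this engineering under the tight bound $d+1$: with only one optional edge available per vertex, the variable, wire, and clause gadgets must force the intended choices with no degree slack whatsoever, and one must verify both that the forced states admit no unintended $d$-regular completions and that the maximum degree never exceeds $d+1$ anywhere. Establishing these two properties simultaneously, for all $d\ge 3$ at once, is the heart of the argument.
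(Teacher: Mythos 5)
First, a point of orientation: the paper does not prove this theorem at all — it is quoted as a known result of Cheah and Corneil~\cite{regsg}, so there is no in-paper proof to match, and any argument you give must effectively reconstruct their reduction. Your outline correctly identifies the shape of the problem (NP membership is indeed routine; the hardness must come from the freedom to omit vertices, since a spanning $d$-factor is polynomial), but as it stands it is a plan rather than a proof: the variable gadget, clause gadget, and padding blocks are never exhibited, and you yourself concede that verifying the two crucial properties (no unintended $d$-regular completions, maximum degree never exceeding $d+1$) ``is the heart of the argument.'' Deferring the heart of the argument is a genuine gap, not a presentational one, because under the tight bound $\Delta=d+1$ it is exactly these verifications that can fail.

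Moreover, one ingredient you do specify is concretely wrong: the ``wires'' consisting of paths whose internal vertices have degree $2$. In a $d$-regular subgraph $H$, every vertex of $H$ has degree exactly $d\ge 3$, so a vertex of degree $2$ in $G$ can never belong to $H$ at all. Your wires are therefore not ``all-or-nothing''; they are ``nothing'' always — no edge of such a path can appear in any $d$-regular subgraph, so the terminal edge is never delivered to the clause gadget and the intended semantics (literal true $\Rightarrow$ wire included) collapses. To transmit a bit between gadgets one needs connector structures whose internal vertices have degree at least $d$ in $G$, e.g.\ chains of near-cliques such as the $K_{d+1}$-minus-an-edge blocks that the paper itself uses (in the proof of Theorem~\ref{thm0-hard}) to subdivide edges while preserving existence of $d$-regular subgraphs; note also that your proposed padding block, $K_{d+1}$ minus a perfect matching, only exists when $d+1$ is even, so it cannot handle all fixed $d\ge 3$ uniformly. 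Repairing the wires and actually constructing and verifying the gadgets for arbitrary $d$ is precisely the content of the Cheah--Corneil proof, and none of it is present here.
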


We need an estimate on the spectral radius of graphs where the vertices of maximum degree
are far apart.

\begin{lemma}\label{lemma-near}
Let $G$ be a graph of maximum degree $d+1$ such that the distance between
every pair of vertices of degree $d+1$ is at least three.  Then
$$\rho(G)\le \sqrt[3]{(d+1)(d^2+1)}.$$
\end{lemma}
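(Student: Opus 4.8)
The plan is to bound $\rho(G)$ by exhibiting a good quotient sub-adjacency matrix, exploiting the fact that vertices of degree $d+1$ are mutually far apart. Since any such vertex has all its neighbours of degree at most $d$ (indeed at distance three between high-degree vertices means even the neighbours of a degree-$(d+1)$ vertex have degree $\le d$), the ``spread out'' structure should let me iterate the adjacency operator three times and control the result. Concretely, I would estimate $\rho(G)^3 = \rho(A^3)$ by counting walks of length three, or equivalently by bounding the row sums of $A^3$ via a Perron eigenvector argument.

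First I would set up a partition of $V(G)$ that distinguishes the vertices of maximum degree $d+1$ from the rest, and apply Lemma~\ref{lem:4} to reduce the spectral radius to that of a small quotient matrix $B'$. The natural partition is $V_1 = \{v : \deg v = d+1\}$ and $V_2 = V(G)\setminus V_1$. Because the distance-three hypothesis forces $V_1$ to be an independent set whose neighbourhoods do not even share common neighbours, the local structure around $V_1$ is a disjoint union of stars, and the relevant block of $B'$ has a clean form. I expect the cube root to arise from a $3\times 3$ (or similar low-dimensional) quotient matrix whose characteristic polynomial factors so that $\rho(B')^3 = (d+1)(d^2+1)$, matching the target exactly.

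Alternatively — and this is probably the cleaner route — I would work directly with the Perron eigenvector $x$ of $A$, normalized so that $\max_v x_v = x_w = 1$ at some vertex $w$. Writing $\lambda = \rho(G)$ and using $A^3 x = \lambda^3 x$, I would examine the entry at $w$: $\lambda^3 x_w = \sum_u (A^3)_{wu} x_u \le \sum_u (A^3)_{wu}$, so $\lambda^3 \le (A^3)_{ww} + \sum_{u\ne w}(A^3)_{wu}$, the total number of length-three walks from $w$. The distance-three condition is exactly what I need to count these walks: along any length-three walk out of $w$, at most one of the three traversed vertices past $w$ can have degree $d+1$, and the positions where a high-degree vertex can appear are constrained. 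Carefully bounding the number of such walks should yield $(d+1)(d^2+1)$.

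The main obstacle will be the bookkeeping in the walk-count: I must verify that the distance-three separation genuinely prevents two degree-$(d+1)$ vertices from both contributing within a single short walk, and get the constant exactly right so that the product is $(d+1)(d^2+1)$ and not merely $O(d^3)$. In particular I need to confirm that whether the starting vertex $w$ itself has degree $d+1$ or only $d$, both cases yield the same bound, and that the worst case (the configuration maximizing the walk count subject to the distance constraint) produces precisely the claimed product. Getting this combinatorial extremal configuration pinned down — rather than a loose upper bound — is where the real care is required; the rest is routine application of Lemma~\ref{lem:4} or the Rayleigh-quotient/eigenvector estimate.
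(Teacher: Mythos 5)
Your ``cleaner route'' is essentially the paper's own proof: the combinatorial core in both is the count of walks of length three, using the hypothesis that any vertex at distance one or two from a degree-$(d+1)$ vertex has degree at most $d$. The paper shows that a vertex $z$ of degree $d+1$ starts at most $(d+1)\bigl[(d+1)+(d-1)d\bigr]=(d+1)(d^2+1)$ walks of length three, and a vertex of smaller degree at most $(d+1)d^2$, then converts this to a spectral bound via $\rho(G)=\limsup_{n\to\infty}\sqrt[n]{c_n}$, splitting a closed walk of length $n$ into $\lceil n/3\rceil$ segments. Your Perron-eigenvector finish ($\lambda^3 = \lambda^3 x_w \le \sum_u (A^3)_{wu}$ at a maximum entry $x_w=1$) is an equally valid, arguably tidier wrapper around the identical walk count, and the bookkeeping you flag does close: the degree-$(d+1)$ start is the worse of your two cases (not ``the same bound''), and the extremal configuration is exactly the return walk $w\to u\to w\to t$, which is why the bracket reads $(d+1)+(d-1)d$ --- your phrase ``at most one of the three traversed vertices past $w$ can have degree $d+1$'' needs this wrinkle, since the walk may revisit the high-degree start itself. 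One genuine warning: your first route cannot work as stated. Lemma~\ref{lem:4} gives only the \emph{lower} bound $\rho(G)\ge\rho(B')$, and for a non-equitable partition (which $\{v:\deg v=d+1\}$ versus the rest will not be in general) the quotient matrix yields no upper bound on $\rho(G)$; Lemma~\ref{lem:3} requires equitability. Had you pursued that reduction instead of the eigenvector argument, the proof would have failed, so the walk-counting route is not merely cleaner --- it is the one that works.
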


\begin{proof}
We may assume that $G$ is connected, since the spectral radius of a graph
is the maximum of the spectral radii of its components.
We use the fact that $\rho(G)=\limsup_{n\to \infty} \sqrt[n]c_n$,
where $c_n$ is the number of closed walks of length $n$ starting at an arbitrary vertex $v$ of $G$.
For any vertex $z$ of degree $d+1$, $G$ contains at most $(d+1)[(d-1)d+(d+1)]=(d+1)(d^2+1)$
walks of length $3$ starting at $z$, including those whose second vertex is $z$ as well.
Similarly, the number of walks of length $3$ from a vertex of degree at most $d$
is at most $(d+1)d^2$.  We conclude that $c_n\le \left[(d+1)(d^2+1)\right]^{\lceil n/3\rceil}$,
and the claim follows.
\end{proof}

We will also use the following result which shows that the spectral radius
of a connected non-regular graph of maximum degree $d$ cannot be arbitrarily close to $d$.

\begin{lemma}[Cioab\u{a} \cite{Ci}]
\label{lemma-nearlw}
Let $G$ be a connected graph of maximum degree $d$ and with diameter $D$.  
If $G$ has a vertex of degree less than $d$, then 
$$\rho(G) < d - \frac{1}{D|V(G)|}.$$
\end{lemma}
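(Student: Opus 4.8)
The plan is to work with the Perron eigenvector of $G$ together with the classical ``energy'' identity that quantifies how far $G$ is from being $d$-regular. Since $G$ is connected, the Perron--Frobenius Theorem supplies a positive eigenvector $x$ with $Ax=\rho(G)x$, where $A$ is the adjacency matrix; I would normalize it so that $\max_v x_v=1$, say $x_w=1$. The starting point is the identity
$$(d-\rho(G))\,\|x\|^2 \;=\; x^{\top}(dI-A)x \;=\; \sum_{v\in V(G)}(d-\deg v)\,x_v^2 \;+\; \sum_{uv\in E(G)}(x_u-x_v)^2,$$
which one checks by expanding $\sum_{uv\in E(G)}(x_u-x_v)^2=\sum_v \deg(v)x_v^2-2\sum_{uv\in E(G)}x_ux_v$ and using $x^{\top}Ax=\rho(G)\|x\|^2$. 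Both sums on the right are non-negative, and since $0<x_v\le 1$ for every $v$ we also have $\|x\|^2\le |V(G)|$. Thus the whole task reduces to bounding the right-hand side from below by roughly $1/D$.

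To produce such a lower bound I would use the vertex $z$ of degree less than $d$, so that $d-\deg z\ge 1$, together with a shortest path $P=(w=v_0,v_1,\dots,v_\ell=z)$, whose length satisfies $\ell\le D$ by the diameter hypothesis. (If $w=z$ then $\ell=0$ and the degree term alone gives $\sum_v(d-\deg v)x_v^2\ge 1$, which already yields the claim, so I may assume $w\neq z$.) Keeping only the term for $z$ in the first sum and only the edges of $P$ in the second, and applying the Cauchy--Schwarz inequality along $P$, the right-hand side is at least
$$x_z^2 + \sum_{i=1}^{\ell}(x_{v_{i-1}}-x_{v_i})^2 \;\ge\; x_z^2 + \frac{(x_{v_0}-x_{v_\ell})^2}{\ell} \;=\; x_z^2 + \frac{(1-x_z)^2}{\ell}.$$
Minimizing the scalar function $t\mapsto t^2+(1-t)^2/\ell$ over $t\in[0,1]$ gives the value $1/(\ell+1)\ge 1/(D+1)$, attained at $t=1/(\ell+1)$. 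Combined with $\|x\|^2\le |V(G)|$ this yields $(d-\rho(G))\,|V(G)|\ge 1/(D+1)$, hence $\rho(G)<d-\tfrac{1}{(D+1)|V(G)|}$.

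The only delicate point is sharpening the denominator from $D+1$ to $D$ to match the stated bound, and this is the part I expect to be the main obstacle. The crude estimate $\|x\|^2\le|V(G)|$ is wasteful precisely because the vertices $v_1,\dots,v_\ell$ on $P$ have entries strictly below $1$, so I would refine it to $\|x\|^2\le |V(G)|-\sum_{i=1}^{\ell}(1-x_{v_i}^2)$ and feed the saved quantity back into the optimization; alternatively, one can exploit that $\rho(G)x_z=\sum_{u\sim z}x_u\le \deg(z)\le d-1$ to control $x_z$ directly and then rebalance the two contributions. Either refinement promotes the $1/(D+1)$ to the required $1/D$, while the combinatorial core of the argument—the energy identity and the Cauchy--Schwarz step along a geodesic—remains unchanged.
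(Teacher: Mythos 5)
Your quadratic-form identity is correct, and the geodesic/Cauchy--Schwarz step is carried out correctly: minimizing $t^2+(1-t)^2/\ell$ does give $1/(\ell+1)\ge 1/(D+1)$, so your argument proves $\rho(G)\le d-\frac{1}{(D+1)|V(G)|}$. But this is the earlier, weaker bound (due to Cioab\u{a}, Gregory and Nikiforov); the sharpening of $D+1$ to $D$ is not a ``delicate point'' to be rebalanced at the end --- it is precisely the content of the cited result, and the paper itself states the lemma with a citation to \cite{Ci} and no proof, so your write-up must deliver the $1/(D|V(G)|)$ bound itself. It does not: both repairs you sketch demonstrably fall short. The observation $\rho(G)x_z=\sum_{u\sim z}x_u\le \deg(z)\le d-1$ only yields $x_z<1$ once you assume (as you may) that $\rho(G)>d-1$, and that says nothing at the optimum $x_z\approx 1/(\ell+1)$ of your minimization, which is already far below $1$. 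The refinement $\|x\|^2\le |V(G)|-\sum_{i=1}^{\ell}(1-x_{v_i}^2)$ also cannot bridge the gap: with $n=|V(G)|$, take the path entries equal to $1$ for the first $\ell-k$ steps and dropping linearly to $t=1/(k+1)$ over the last $k$ steps; then your numerator is about $\frac{1}{k+1}$ while the savings in $\|x\|^2$ are at most about $k+1$, so the resulting bound is roughly $\frac{1}{(k+1)(n-k-1)}$, and with $k=\ell=D$ this is \emph{smaller} than $\frac{1}{Dn}$ as soon as $n>D(D+1)$, roughly. In other words, when $n\gg D^2$ the multiplicative gain $\frac{D+1}{D}$ you need would require shaving about $n/D\gg D$ off $\|x\|^2$, far more than the $O(D)$ the path vertices can supply.

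So the core of the proposal is the standard and correct opening move (Perron vector, the identity $(d-\rho(G))\|x\|^2=\sum_v(d-\deg v)x_v^2+\sum_{uv\in E}(x_u-x_v)^2$, Cauchy--Schwarz along a shortest $w$--$z$ path), but the step from $\frac{1}{(D+1)n}$ to $\frac{1}{Dn}$ is a genuine gap: it needs a new idea --- in Cioab\u{a}'s paper this improvement requires a finer analysis (e.g.\ exploiting extra slack unless the configuration is extremal, with case distinctions on the position of $z$ and the shape of the eigenvector along the path), not a local patch to the two inequalities you used. A secondary, easily fixed point: your chain of inequalities yields only ``$\le$'', whereas the lemma asserts strict inequality, so at some stage you must identify a term that is strictly positive.
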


%\begin{proof}
%Let $G'$ be the graph obtained from $G$ by adding a loop at each vertex.
%Note that $\rho(G')=\rho(G)+1$.  Let $v$ be a vertex of $G'$ of degree $k\le d$, $w_n$ the
%number of walks in $G'$ of length $n$ that start at $v$, and $c_n$ the number of closed walks
%of length $n$ that start at $v$.  Let $m=|E(G')|$.  Since $G'$ is connected and is not bipartite, we have
%$\lim_{n\to\infty} \frac{c_n}{w_n} = \frac{k}{2m}$.  Also, since the degree of every vertex of $G'$
%is at most $d+1$ and the degree of $v$ is $k$, we have $w_{n+1}\le (d+1)(w_n-c_n)+kc_n$,
%and it follows that
%\begin{eqnarray*}
%\rho(G)&=&\rho(G')-1=-1+\lim_{n\to\infty} \frac{w_{n+1}}{w_n}\\
%&\le& d-\frac{k}{2m}(d+1-k) < d-\frac{d}{(d+1)|V(G)|}.
%\end{eqnarray*}
%\end{proof}

We can now proceed with examining the complexity of spectral degeneracy computation.

\begin{lemma}
\label{lem-coNP}
The {\sc Spectral Degeneracy Problem} is in co-NP.
\end{lemma}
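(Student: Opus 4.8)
The plan is to show that whenever $G$ fails to be spectrally $d$-degenerate, there is a short certificate of this fact that can be verified in polynomial time. The natural certificate is a subgraph $H\subseteq G$ together with its maximum degree $\Delta(H)$; the witness establishes non-membership, so producing such an $H$ places the complementary problem in NP and hence the {\sc Spectral Degeneracy Problem} itself in co-NP. The definition of spectral $d$-degeneracy is that \emph{every} subgraph $H$ satisfies $\rho(H)\le\sqrt{d\,\Delta(H)}$, so $G$ is \emph{not} spectrally $d$-degenerate exactly when some $H\subseteq G$ has $\rho(H)>\sqrt{d\,\Delta(H)}$. First I would take the guessed subgraph $H$ to be the certificate, compute $\Delta(H)$ trivially, and reduce the task to verifying the strict inequality $\rho(H)^2>d\,\Delta(H)$ (or, since $d$ is rational, $\rho(H)^2>d\,\Delta(H)$ after clearing denominators).

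The substance lies in verifying this inequality in polynomial time, since $\rho(H)$ is generally an irrational algebraic number and cannot be written down exactly. The approach is to observe that $\rho(H)$ is a root of the characteristic polynomial $p(x)=\det(xI-A(H))$, an integer polynomial of degree $k=|V(H)|\le|V(G)|$ whose coefficients have bit-length bounded polynomially in the size of $H$; thus $a(p)=O(k\log k)$. The quantity we must decide the sign of is $g(d)=d\,\Delta(H)-\rho(H)^2$, and if $d=a/b$ in lowest terms, we may equivalently test the sign of the integer-scaled quantity. The key point is that $\rho(H)$ and $\sqrt{d\,\Delta(H)}$ are both algebraic of bounded degree and bounded height, so if they are unequal then they are separated by a gap that is not too small: by Lemma~\ref{lemma-distroots} applied to a suitable integer polynomial having both $\rho(H)$ and $\sqrt{d\,\Delta(H)}$ among its roots (for instance the product of $p(x)$ with the integer polynomial $b^2x^2-a\,\Delta(H)$, cleared of denominators), the two are either equal or differ by at least $2^{-q}$ for some $q$ polynomial in the input size.

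Given this separation bound, the verification algorithm computes rational approximations of $\rho(H)$ to accuracy finer than half the guaranteed gap, which is achievable in polynomial time by binary search using Sturm sequences or by evaluating $p$ at dyadic rationals, and then compares the approximation against $\sqrt{d\,\Delta(H)}$ with the same precision. Because the gap between the two algebraic numbers is at least $2^{-q}$, an approximation to precision $2^{-(q+1)}$ resolves the strict inequality unambiguously, and the whole computation runs in time polynomial in $|V(G)|$ and the bit-length of $d$. I expect the main obstacle to be bookkeeping the height and degree bounds so that the root-separation estimate of Lemma~\ref{lemma-distroots} yields a gap $2^{-q}$ with $q$ genuinely polynomial in the input, and in particular handling the edge case where $\rho(H)^2$ equals $d\,\Delta(H)$ exactly (which does not violate degeneracy, since the defining inequality is non-strict); the separation lemma guarantees that equality can be certified once the approximation is finer than the gap, so the comparison is decisive in every case.
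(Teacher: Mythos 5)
Your proof is correct, and it shares the paper's skeleton---guess a subgraph $H$, compute its characteristic polynomial $p$ in polynomial time (coefficients bounded by $k!$, so $a(p)=O(k\log k)$), and invoke Lemma~\ref{lemma-distroots} to get polynomial bit-size bounds---but your verification step takes a genuinely different route. The paper enlarges the certificate: besides $H$ (taken \emph{connected}, so that $\rho(H)$ is a simple root of $p$ by Perron--Frobenius), it guesses a rational $x>b=\sqrt{d\Delta(H)}$ with $p(x)<0$; simplicity of the top root forces $p<0$ on an interval $(y,\rho(H))$ whose left endpoint $y\ge b$ is a root of $p$, and Lemma~\ref{lemma-distroots} applied to the two roots $y$ and $\rho(H)$ of $p$ itself shows this interval has width $2^{-O(k^4\log k)}$, so a polynomial-size witness $x$ exists and verification reduces to a single sign evaluation. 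You instead keep the certificate minimal ($H$ alone) and verify deterministically: you apply Lemma~\ref{lemma-distroots} to the product of $p$ with the integer polynomial vanishing at $\sqrt{d\Delta(H)}$ (note that for $d=a/b$ in lowest terms this should be $b\,x^2-a\,\Delta(H)$, not $b^2x^2-a\,\Delta(H)$ as written---a harmless slip) to separate $\rho(H)$ from $\sqrt{d\Delta(H)}$ whenever they differ, then approximate $\rho(H)$ to within half that gap and compare. This buys a somewhat stronger conclusion---checking the inequality for a given $H$ is outright in P, and you need neither connectivity of $H$ nor simplicity of the Perron root---at the cost of importing polynomial-time real-root machinery. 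One caution there: mere sign evaluation of $p$ at dyadic rationals does \emph{not} implement the binary search for the largest root, since $p$ may be positive between roots; you genuinely need root counting (Sturm or subresultant sequences, whose polynomial running time is standard but not free), which your primary suggestion provides. Your handling of the borderline case $\rho(H)=\sqrt{d\Delta(H)}$ is also sound: the separation bound converts approximate coincidence into certified equality, so the verifier correctly rejects, matching the non-strict inequality in the definition.
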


\begin{proof}
To verify that the spectral degeneracy of $G$ is
greater than $d$, guess a connected subgraph $H$ of $G$ (on $k\le|V(G)|$ vertices)
such that $\rho(H)>\sqrt{d\Delta(H)}=b$.  To prove that $H$ has this property, first
compute the characteristic polynomial $p(x)=\det(xI-M)$, where $M$ is the adjacency matrix of $H$.
Note that the absolute value of each coefficient of $p$ is at most $k!$
and that $p$ can be computed in polynomial time using, for example,
Le Verrier--Faddeev's algorithm~\cite{leverrier}.
Then, we need to show that $p$ has a real positive root greater than $b$.  This is the case if
$p(b)<0$ and this condition can be verified in a polynomial time, since $b$ is a square root of a rational number.
Hence, we may assume that $p(b)\ge 0$. Let us recall that $\rho(H)$ is a simple root of $p$. 
Hence, if $\rho(H) > b$, then there exists a root $y$ of $p$ such that
$b \le y < \rho(H)$ and $p(x)<0$ when $y<x<\rho(H)$.  To prove that $b < \rho(H)$, it suffices to guess
a value $x>b$ such that $p(x)<0$, say any value between $y$ and $\rho(H)$.  By Lemma~\ref{lemma-distroots},
$-\log (\rho(H) - y) = O(k^4\log k)$, and thus such a number $x$ can be expressed in polynomial space.
\end{proof}

For the hardness part, let us first consider a related problem of deciding whether the spectral degeneracy
is greater or equal to some given constant.

\begin{theorem}\label{thm0-hard}
For any fixed integer $d\ge 3$, verifying whether the spectral degeneracy of a graph is at least $d$
is NP-hard, even when restricted to graphs of maximum degree $d+1$.
\end{theorem}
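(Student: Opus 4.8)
The plan is to reduce from the problem of deciding whether a graph of maximum degree $d+1$ has a $d$-regular subgraph, which is NP-complete by Theorem~\ref{thm-regsg}. The guiding observation is that for a graph $G'$ of maximum degree $d+1$, its spectral degeneracy equals $\max_H \rho(H)^2/\Delta(H)$, and that this maximum may be taken over \emph{connected} subgraphs $H$, since passing to the component of $H$ of largest spectral radius does not decrease the ratio $\rho(H)^2/\Delta(H)$. I would then analyze when this maximum can reach $d$. If $\Delta(H)\le d$, then $\rho(H)\le\Delta(H)\le d$, and $\rho(H)^2/\Delta(H)=d$ forces $\Delta(H)=d$ and $\rho(H)=d$; by the Perron--Frobenius theorem (or, quantitatively, Lemma~\ref{lemma-nearlw}) a connected graph attains $\rho=\Delta$ only when it is regular, so $H$ must be $d$-regular. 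Hence a subgraph with $\Delta(H)\le d$ witnesses spectral degeneracy at least $d$ if and only if $G'$ contains a $d$-regular subgraph.

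The remaining danger is a subgraph $H$ with $\Delta(H)=d+1$ and $\rho(H)\ge\sqrt{d(d+1)}$, which could push the spectral degeneracy up to $d$ even in the absence of any $d$-regular subgraph. To rule this out I would arrange that in $G'$ the vertices of degree $d+1$ are pairwise at distance at least three. This property is inherited by every subgraph $H$ (a vertex of degree $d+1$ in $H$ has degree $d+1$ in $G'$, and distances only grow in subgraphs), so Lemma~\ref{lemma-near} gives $\rho(H)\le\sqrt[3]{(d+1)(d^2+1)}$ whenever $\Delta(H)=d+1$. Comparing sixth powers reduces the inequality $\sqrt[3]{(d+1)(d^2+1)}<\sqrt{d(d+1)}$ to $d^3>2d^2+1$, which holds for all $d\ge 3$; thus every such $H$ contributes strictly less than $d$. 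Combining the two cases, once the degree-$(d+1)$ vertices of $G'$ are pairwise at distance at least three, the spectral degeneracy of $G'$ is at least $d$ if and only if $G'$ contains a $d$-regular subgraph.

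It therefore remains to transform an arbitrary instance $G$ of maximum degree $d+1$, in polynomial time, into a graph $G'$ of maximum degree $d+1$ whose degree-$(d+1)$ vertices are pairwise at distance at least three and which has a $d$-regular subgraph exactly when $G$ does. This is the step I expect to be the main obstacle. Plain edge subdivision fails, since for $d\ge 3$ a vertex of degree two can never lie in a $d$-regular subgraph, so subdividing destroys every $d$-regular subgraph. Instead one must replace each edge joining two nearby high-degree vertices by a ``spacer'' gadget that behaves like an edge for the purpose of assembling a $d$-factor: it is either fully used or fully unused, its internal vertices can reach degree $d$ precisely when it is used, it contains no $d$-regular subgraph of its own (so as not to create spurious witnesses), and it separates its two endpoints by distance at least three. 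Designing such an $f$-factor forcing gadget and checking that it neither creates nor destroys $d$-regular subgraphs is the technical crux; granting it, the equivalence established above together with Theorem~\ref{thm-regsg} yields the claimed NP-hardness.
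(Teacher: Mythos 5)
Your reduction framework coincides with the paper's: you reduce from the $d$-regular-subgraph problem of Theorem~\ref{thm-regsg}, split connected subgraphs $H$ of the constructed graph $G'$ according to whether $\Delta(H)=d+1$ or $\Delta(H)\le d$, kill the first case via Lemma~\ref{lemma-near} (your sixth-power computation reducing $\sqrt[3]{(d+1)(d^2+1)}<\sqrt{d(d+1)}$ to $d^3>2d^2+1$ is correct and even supplies a detail the paper only asserts), and settle the second case by the fact that a connected graph attains $\rho=\Delta$ only when it is regular. But the one step you explicitly leave unproved --- the spacer gadget --- is exactly the content of the paper's construction, so you have written down the right specification without discharging it, and as it stands the proof is incomplete at its crux.

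The gadget is simpler than your description anticipates: replace \emph{every} edge $uv$ of $G$ (not only edges joining nearby high-degree vertices) by a graph $G_{uv}$ obtained from a clique on $d+1$ new vertices by deleting one edge $xy$ and adding the edges $ux$ and $vy$. Every gadget vertex then has degree exactly $d$ in $G'$ (the vertices $x$ and $y$ have $d-1$ clique neighbours plus one attachment edge; the remaining $d-1$ vertices have $d$ clique neighbours), so the only vertices of $G'$ that can have degree $d+1$ are original vertices of $G$, and any two of these are at distance at least $4$ in $G'$ --- the distance hypothesis of Lemma~\ref{lemma-near} thus holds automatically, with no separate case analysis needed. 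The all-or-nothing property you asked for follows by propagation: if a $d$-regular subgraph $H\subseteq G'$ contains any gadget vertex $z$, then since $\deg_{G'}(z)=d$ all edges of $G'$ at $z$ lie in $H$, and pushing this through the clique forces all of $G_{uv}$ together with $ux$ and $vy$ into $H$. Moreover $G_{uv}$ by itself contains no $d$-regular subgraph, since the only $d$-regular graph on at most $d+1$ vertices is $K_{d+1}$ and the edge $xy$ is missing. Hence $d$-regular subgraphs of $G'$ correspond exactly to $d$-regular subgraphs of $G$, each fully used gadget contributing degree one to each endpoint just as the original edge would. Inserting this gadget into your argument closes the gap, after which your proof agrees with the paper's in every other respect.
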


\begin{proof}
We give a reduction from the problem of finding a $d$-regular subgraph in a graph $G$ of
maximum degree $d+1$, which is NP-hard by Theorem~\ref{thm-regsg}.  Let $G'$ be the graph
obtained from $G$ by replacing each edge $uv$ by a graph $G_{uv}$ created from a clique on $d+1$ new vertices
by removing an edge $xy$ and adding the edges $ux$ and $vy$.  Consider a connected subgraph $H\subseteq G'$.  If $H$ is $d$-regular and
$z\in V(H)$ belongs to $V(G_{uv})\setminus \{u,v\}$, then $G_{uv}\subseteq H$.  It follows that
$G'$ contains a $d$-regular subgraph if and only if $G$ contains a $d$-regular subgraph.

Furthermore, if $\Delta(H)=d+1$, then by Lemma~\ref{lemma-near} we have
$$\rho(H)\le \sqrt[3]{(d+1)(d^2+1)}<\sqrt{d\Delta(H)},$$ and if $\Delta(H)\le d$,
then $\rho(H)\le\sqrt{d\Delta(H)}$, where the equality holds if and only if $H$ is $d$-regular.
Therefore, $G$ has a $d$-regular subgraph if and only if the spectral degeneracy of $G'$ is
at least $d$.  Since the size of $G'$ is polynomial in the size of $G$, this shows that
deciding whether the spectral degeneracy of a graph is at least $d$ is NP-hard.
\end{proof}

A small variation of this analysis gives us the desired result.

\begin{theorem}\label{thm-hard}
The {\sc Spectral Degeneracy Problem} is co-NP-complete.
\end{theorem}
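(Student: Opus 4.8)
The plan is to establish co-NP-completeness by combining the two halves that are already largely in place. Membership in co-NP is exactly the content of Lemma~\ref{lem-coNP}, so the remaining work is co-NP-hardness, which I would obtain by showing that the complement of the {\sc Spectral Degeneracy Problem} is NP-hard. Concretely, I would argue that deciding whether a graph is \emph{not} spectrally $d$-degenerate is NP-hard, and then invoke Lemma~\ref{lem-coNP} to conclude that the original decision problem sits in co-NP, giving co-NP-completeness.

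The natural route is to recycle the reduction from Theorem~\ref{thm0-hard}. There we reduced the $d$-regular subgraph problem (NP-hard by Theorem~\ref{thm-regsg}) to deciding whether the spectral degeneracy of the gadget graph $G'$ is \emph{at least} $d$. The analysis shows that a connected subgraph $H\subseteq G'$ with $\Delta(H)=d+1$ has $\rho(H)<\sqrt{d\Delta(H)}$ by Lemma~\ref{lemma-near}, while if $\Delta(H)\le d$ then $\rho(H)\le\sqrt{d\Delta(H)}$ with equality precisely when $H$ is $d$-regular. The key observation I would exploit is that ``spectral degeneracy at least $d$'' is \emph{not} literally the negation of ``spectrally $d$-degenerate,'' because the definition uses a non-strict inequality $\rho(H)\le\sqrt{d\Delta(H)}$. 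So I need a small perturbation: I would replace the threshold $d$ by a slightly smaller rational $d' = d - \varepsilon$ for a suitably chosen $\varepsilon$, so that a $d$-regular subgraph strictly violates the spectral $d'$-degeneracy bound while non-regular subgraphs of maximum degree $\le d$ still satisfy it.

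The one quantitative point that needs care is choosing $\varepsilon$ so that this separation is clean and so that $\varepsilon$ can be written down with polynomially many bits. A $d$-regular subgraph $H$ has $\rho(H)=d>\sqrt{d'\cdot d}$ as soon as $d'<d$, so violation is immediate for any positive $\varepsilon$. The delicate direction is ensuring that every \emph{non}-$d$-regular connected subgraph $H$ with $\Delta(H)\le d$ still has $\rho(H)\le\sqrt{d'\Delta(H)}$; here I would use Lemma~\ref{lemma-nearlw}, which guarantees that a connected non-regular graph of maximum degree $d$ has $\rho(H)<d-\frac{1}{D|V(H)|}$, a gap that is at least polynomially large (in terms of bit-length) in $|V(G')|$. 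Choosing $\varepsilon$ smaller than the square of this gap, and small enough that the $\Delta(H)=d+1$ case from Lemma~\ref{lemma-near} still leaves room, makes the reduction faithful: $G$ has a $d$-regular subgraph if and only if $G'$ is \emph{not} spectrally $d'$-degenerate. I expect this bookkeeping with the Cioab\u{a} gap to be the main obstacle, since I must verify both that $\varepsilon$ admits a polynomial-size encoding and that it simultaneously clears all three subgraph regimes ($\Delta=d+1$, $\Delta=d$ non-regular, and $\Delta<d$). Once that is settled, NP-hardness of the complement together with Lemma~\ref{lem-coNP} yields that the {\sc Spectral Degeneracy Problem} is co-NP-complete.
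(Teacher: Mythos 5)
Your proposal is correct and takes essentially the same route as the paper: both reuse the gadget $G'$ from Theorem~\ref{thm0-hard}, invoke Lemma~\ref{lem-coNP} for co-NP membership, and use Lemmas~\ref{lemma-near} and~\ref{lemma-nearlw} to insert a polynomially encodable rational threshold strictly below $d$ but above the spectral bounds for all non-$d$-regular subgraphs --- your $d'=d-\varepsilon$ is exactly the paper's choice of $b$ with $\max\bigl\{\sqrt[3]{(d^2+1)^2/(d+1)},\ d-1,\ d-n^{-2}\bigr\}\le b<d$. Your handling of the Cioab\u{a} gap is sound (any $\varepsilon\le n^{-2}$ already works, so your more conservative choice is harmless), and your observation that the threshold must drop strictly below $d$ because spectral degeneracy uses a non-strict inequality is precisely the point the paper addresses by introducing $b$.
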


\begin{proof}
By Lemma~\ref{lem-coNP}, the problem is in co-NP, so it remains to exhibit a reduction
from a co-NP-hard problem.

Consider the graph $G'$ from the proof of Theorem~\ref{thm0-hard} and its connected subgraph $H$.  
If $H$ has maximum degree $d+1$, then the spectral radius of $H$ is at most 
$$\sqrt{\sqrt[3]{\frac{(d^2+1)^2}{d+1}}\Delta(H)}$$
by Lemma~\ref{lemma-near}.
If $H$ has maximum degree at most $d-1$, then $$\rho(H)\le \sqrt{(d-1)\Delta(H)}.$$  Finally,
if $\Delta(H)=d$ and $H$ is not $d$-regular, then 
$$\rho(H)\le \sqrt{\left(d-|V(H)|^{-2}\right)^2}\le \sqrt{\left(d-|V(H)|^{-2}\right)\Delta(H)}$$
by Lemma~\ref{lemma-nearlw}.

Let $n=|V(G')|$.  Let $b$ be a rational number such that
$$\max\left\{\sqrt[3]{\frac{(d^2+1)^2}{d+1}},\ d-1,\ d-n^{-2}\right\}\le b<d.$$
We conclude that either $G'$ has spectral radius at least $d$ or at most $b$.  Thus, deciding whether
the spectral degeneracy of a graph is at most $b$ (where $b$ is part of the input) is co-NP-hard.
\end{proof}

However, this does not exclude the possibility that the spectral degeneracy could be approximated
efficiently. Let $\varepsilon>0$ be a constant.

\begin{quote}
	{\sc Approximate spectral degeneracy} \\
	{\sc Input}: A graph $G$ and a rational number $d$. \\
	{\sc Task}: Either prove that $G$ is spectrally $(1+\varepsilon)d$-degenerate,
or show that it is not spectrally $d$-degenerate.
\end{quote}

Does there exist $\varepsilon$ such that this problem can be solved in a polynomial time?
Or possibly, is it true that this question can be solved in a polynomial time for every $\varepsilon>0$?
Both of these questions are open.

\bibliographystyle{siam}
\bibliography{SpectralRadiusDegenerate}
\end{document}